\date{\today}
\definecolor{darkblue}{rgb}{0.0, 0.0, 0.45}
\renewcommand\thesection{\arabic{section}}
\newtheorem{Thm}{Theorem} [section]
\newtheorem{Prop}[Thm]{Proposition}
\newtheorem{Fact}[Thm]{Fact}
\newtheorem{Lem}[Thm]{Lemma}
\newtheorem{As}[Thm]{Assumption}
\newtheorem{Def}[Thm]{Definition}
\newtheorem{Rem}[Thm]{Remark}
\newcommand{\PP}{\mathds{P}}
\newcommand{\EE}{\mathds{E}}
\newcommand{\R}{\mathbb{R}}
\newcommand{\N}{\mathbb{N}}
\newcommand{\eps}{\varepsilon}
\newcommand{\lra}{\longrightarrow}
\newcommand{\ra}{\rightarrow}
\newcommand{\da}{\downarrow}
\newcommand{\ua}{\uparrow}
\newcommand{\ind}[1]{\mathds{1}_{#1}}
\newcommand{\Let}{\coloneqq}
\newcommand{\teL}{\eqqcolon}
\newcommand{\diff}{\mathrm{d}}
\newcommand{\mn}{\wedge}
\newcommand{\mx}{\vee}
\newcommand{\ol}[1]{\overline{#1}}
\newcommand{\ul}[1]{\underline{#1}}
\newcommand{\wt}{\widetilde}
\newcommand{\shift}{\vartheta}
\newcommand{\com}{\circ}
\newcommand{\tr}{^\intercal}
\newcommand{\eqsmall}[1]{{\small $#1$}}
\newcommand{\goodgap}{\hspace{\subfigtopskip}}
\newcommand{\traj}[4]{#1_{#4}^{#2;#3}}
\newcommand{\setofst}[1]{\mathcal{T}_{[#1]}}
\newcommand{\sigalg}{\mathcal{F}}
\newcommand{\filtration}{\mathds{F}}
\newcommand{\sat}{\models}
\newcommand{\borel}{\mathfrak{B}}
\newcommand{\Reach}[1]{\stackrel{#1}{\lra}}
\newcommand{\Path}{\leadsto}
\newcommand{\PathSet}{\mathrm{MP}}
\newcommand{\controlmaps}[1]{\mathcal{#1}}
\newcommand{\control}[1]{\boldsymbol{#1}}
\newcommand{\ball}[1]{\mathrm{#1}}
\newcommand{\set}[1]{\mathbb{#1}}
\newcommand{\E}[1]{\mathds{E}{\left[ #1 \right]} }
\DeclareMathOperator{\dist}{dist}
\title{Motion Planning for Continuous Time Stochastic Processes: A Dynamic Programming Approach}
\thanks{PME and JL are with the Automatic Control Laboratory, ETH Z\"urich, 8092 Z\"urich, Switzerland; DC is with the Systems \& Control Engineering, IIT-Bombay, Powai, Mumbai 400076, India. Emails: {\{mohajerin,lygeros\}@control.ee.ethz.ch, chatterjee@sc.iitb.ac.in}}
\author{Peyman Mohajerin Esfahani, Debasish Chatterjee, and John Lygeros}
\begin{document} 
	\maketitle
	\begin{abstract}
		We study stochastic motion planning problems which involve a controlled process, with possibly discontinuous sample paths, visiting certain subsets of the state-space while avoiding others in a sequential fashion. For this purpose, we first introduce two basic notions of motion planning, and then establish a connection to a class of stochastic optimal control problems concerned with sequential stopping times. A weak dynamic programming principle (DPP) is then proposed, which characterizes the set of initial states that admit a control enabling the process to execute the desired maneuver with probability no less than some pre-specified value. The proposed DPP comprises auxiliary value functions defined in terms of discontinuous payoff functions. A concrete instance of the use of this novel DPP in the case of diffusion processes is also presented. In this case, we establish that the aforementioned set of initial states can be characterized as the level set of a discontinuous viscosity solution to a sequence of partial differential equations, for which the first one has a known boundary condition, while the boundary conditions of the subsequent ones are determined by the solutions to the preceding steps. Finally, the generality and flexibility of the theoretical results are illustrated on an example involving biological switches.
	\end{abstract}

\section{Introduction}

	Motion planning of dynamical systems aims to steer the state of the system through certain given sets in a specific order and pre-assigned time schedule. This problem finds a wide spectrum of applications ranging from air traffic  management \cite{ref:TomlinLygerosSastry-2000} and security of power networks \cite{ref:Mohajerin_ACC10} to navigation of unmanned air vehicles \cite{ref:Beard-00,ref:Beard-02}. 
	
	\subsection{Context}	
	The two fields of robotics and control have contributed much to this problem. Particular problems of interest in this context are the control synthesis for a given initial condition to accomplish the motion planning task, and the determination of the set of initial conditions for which such a controller exists.

	The focus in the robotics community has mainly been on the first objective (i.e., control synthesis for a given initial condition) with particular emphasis on computational issues. To this end, the motion planning problem is typically \emph{approximated} in an optimal control framework wherein the cost function rewards the target set while penalizing the obstacles. In this vein, there is a rich literature to tackle the corresponding approximation problems. Examples include \cite{ref:DPP-Mayne-70} that capitalizes on the synergy between local and global methods, \cite{ref:Todorov} that builds on \cite{ref:DPP-Mayne-70} to propose an iterative LQG scheme, and \cite{ref:Kappen-07} extended later by \cite{ref:BuchSchaal-10} that proposes a reinforcement learning based approach to generate parametrized control policy via path integrals. 
	
	Research toward the second objective (i.e. characterizing the desired set of initial conditions) is traditionally conducted in the control community, and is the focus of this article. A technical difficulty to \emph{accurately} characterize this set is a potential non-smooth behavior commonly arising in the optimal control context, which consequently renders the analysis of the value function more involved. This issue is a computational concern for control synthesis purposes as well; see, for example, \cite{ref:Milut-11} that opts to circumvent this non-smoothness by introducing additional noise. 
	
	\subsection{Literature on Set Characterization}
	From a set characterization viewpoint, motion planning problems in the deterministic setting have been studied extensively from different perspectives; see, for example, \cite{ref:Aubin-1991} in the language of viability theory, \cite{ref:Shin-86} for a dynamic programming approach, and also \cite{ref:Suss-motion-91,ref:Suss-motion-98} for a differential geometric perspective. In the stochastic setting, stochastic viability and controlled invariance are studied in an almost-sure treatment in \cite{ref:AubinPratoFrankowska-2000, ref:BardiJensen-2002}; see also the references therein. Following the same objective, methods involving stochastic contingent sets \cite{ref:AubinPrato-1998}, viscosity solutions to second-order partial differential equations \cite{ref:BardiGoatin-1999}, and also the equivalence for the invariance problem between a stochastic differential equation (SDE) and a certain deterministic control system were developed in this context \cite{ref:PratoFrankowska-2004}. 
	
	Although almost-sure motion planning maneuvers are interesting in their own right, they maybe be too conservative in applications with less stringent safety requirements where a common specification involves only bounding the probability that undesirable events take place. This line of research in the stochastic setting has received relatively little attention, in particular for systems governed by SDEs. In fact, it was not until recently that the basic motion planning problem involving one target and one obstacle set, the so-called reach-avoid problem, was investigated in the context of finite probability spaces for a class of continuous-time Markov decision processes \cite{ref:Katoen-05}, and in the discrete-time stochastic hybrid systems context \cite{Chatterjee2011, ref:Summers-10}.

	In the continuous time and continuous space settings, one may tackle the dynamic programming formulation of the reach-avoid problem from two perspectives: a direct technique based on the theory of stochastic target problems, and an indirect approach via an exit-time stochastic optimal control formulation. For the former, we refer the reader to \cite{BouchardTouzi_WeakDPP}; see also the recent book \cite{ref:Touzi-13}. In our earlier works \cite{ref:MohChaLyg-11,ref:MohChatLyg-15} we focused on the latter perspective for reachability of controlled diffusion processes. Here we continue in the same spirit by going beyond the reach-avoid problem to more complex motion planning problems for a larger class of stochastic processes with possibly discontinuous sample paths. Preliminary results in this direction were reported in \cite{ref:MohMilCha-2013} with an emphasis on the application side and without covering the technical details.

	\subsection{Our Contributions}
	
	In this context the main contributions of the present article are summarized below:	
	
	\begin{enumerate}[label=(\roman*), itemsep = 2mm, nolistsep, leftmargin=*, align=right, widest=iii]
		
		\item Based on a formal definition of the different stochastic motion planning maneuvers (Section \ref{sec:problem}), we establish a connection between the desired task and a class of stochastic optimal control problems (Section \ref{sec:connection}); 
		
		\item we propose a weak dynamic programming principle (DPP) under mild assumptions on the admissible controls and the stochastic process (Section \ref{sec:DPP}); 
		
		\item in the context of controlled SDEs, based on the proposed DPP we derive a partial differential equation (PDE) characterization of the desired set of initial conditions (Section \ref{sec:application}). 
	\end{enumerate}

	In more detail, we start with the formal definition of a motion planning objective comprising of two fundamental reachability maneuvers. To the best of our knowledge, this is new in the literature. We address the following natural question: for which initial states do there exist an admissible control such that the stochastic processes satisfy the motion planning objective with a probability greater than a given value $p$? We then characterize this set of initial states by establishing a connection between the motion planning specifications and a class of stochastic optimal control problems involving discontinuous payoff functions and a sequence of successive exit-times. 
	
	Due to the discontinuity of the payoff functions, the classical results in stochastic optimal control problems and its connection to Hamilton-Jacobi-Bellman PDEs are not applicable here, see for instance \cite[Section IV.7]{SonerBook}. Under mild technical assumptions, we propose a weak DPP involving auxiliary value functions which only requires semicontinuity of the value functions. It is worth noting that the DPP is developed in a rather general setting, emphasizing particular features of the underlying process that lead to such a characterization of the motion planning. Closest in spirit to our work is \cite{BouchardTouzi_WeakDPP} in which the optimal control problem is studied in a fixed time horizon as well as an optimal stopping setting. Neither of these settings is applicable to our exit-time framework. From a technical standpoint, this article extends the technology developed by \cite{BouchardTouzi_WeakDPP} to deal with the class of exit-time problems suitable to our motion planning tasks. 
		

	
	
	Finally, we focus on a class of controlled SDEs in which the required assumptions of the proposed DPP are investigated. Indeed, it turns out that the standard uniform non-degeneracy and exterior cone conditions of the involved sets suffice to fulfill the DPP requirements. Subsequently, we demonstrate how the DPP leads to a new framework for characterizing the desired set of initial conditions based on tools from PDEs. Due to the discontinuities of the value functions involved, all the PDEs are understood in the generalized notion of the so-called discontinuous viscosity solutions. In this context, we show how the value functions can be solved by a means of a sequence of PDEs, in which the preceding PDE provides the boundary condition of the current one. 
	
	\subsection{Computational Complexity and Existing Numerical Tools}
	On the computational side, it is well-known that PDE techniques suffer from the curse of dimensionality. In the literature a class of suboptimal control methods referred to as Approximate Dynamic Programming (ADP) have been developed for dealing with this difficulty; for a sampling of recent works see \cite{ref:FarVan-03} for a linear programming approach, \cite{ref:Konda-03} for actor-critic algorithms, and \cite{ref:Bertsekas-05} for a comprehensive survey on the entire area. Besides the ADP literature, very recent progress on numerical methods based on tensor train decompositions holds the potential of substantially ameliorating this curse of dimensionality; see a representative article \cite{ref:KhoSch-11} and the references therein. In this light, taken in its entirety, the results in this article can be viewed as a theoretical bridge between the motion planning objective formalized in Section \ref{sec:problem} and sophisticated numerical methods that can be used to address real problem instances. Here we demonstrate the practical use of this bridge by addressing a stochastic motion planning problem for biological switches.
	

	The organization of the article is as follows: In Section \ref{sec:problem} we formally introduce the stochastic motion planning problems. In Section \ref{sec:connection} we establish a connection between the motion planning objectives and a class of stochastic optimal control problems, for which a weak DPP is proposed in Section \ref{sec:DPP}. A concrete instance of the use of the novel DPP in the case of controlled diffusion processes is presented in Section V, leading to characterization of the motion planning objective with the help of a sequence of PDE's in an iterative fashion. To validate the performance of the proposed methodology, in Section \ref{sec:simulation} the theoretical results are applied to a biological two-gene network. For better readability, the technical proofs along with required preliminaries are provided in the appendices.

\paragraph{Notation}
Given $a,b \in \R$, we define $a\mn b \Let \min\{a, b\}$ and $a\mx b \Let \max\{a, b\}$. We denote by $A^c$ (resp.\ $A^\circ$) the complement (resp.\ interior) of the set $A$. We also denote by $\ol{A}$ (resp.\ $\partial A$) the closure (resp.\ boundary) of $A$. We let $\ball{B}_r(x)$ be an open Euclidean ball centered at $x$ with radius $r$. The Borel $\sigma$-algebra on a topological space $\set{A}$ is denoted by $\borel(\set{A})$, and measurability on $\R^d$ will always refer to Borel-measurability. Given function \(f:\set{A}\ra\R\), the lower and upper semicontinuous envelopes of \(f\) are defined, respectively, by $f_{*}(x) := \liminf_{x' \ra x} f(x')$ and $f^{*}(x) := \limsup_{x' \ra x} f(x')$. Throughout this article all (in)equalities between random variables are understood in almost sure sense. For the ease of the reader, we also provide here a partial notation list which will be also explained in more details later throughout the article:
\begin{enumerate}[label=$\bullet$, itemsep = 0mm, topsep=0mm] 
	\item $\controlmaps{U}_t$: set of admissible controls at time $t$;
	\item $(\traj{X}{t,x}{\control{u}}{s})_{s \ge 0}$: stochastic process under the control $\control{u}$ and convention $\traj{X}{t,x}{\control{u}}{s} \Let x$ for all $s \le t$;
	\item $(W_i \Path G_i)_{\le T_i}$ (resp.\ $W_i \Reach{T_i} G_i$ ) : motion planning event of reaching $G_i$ sometime before time $T_i$ (resp.\ at time $T_i$) while staying in $W_i$, see Definition \ref{def:event};
	\item $\big(\Theta^{A_{k:n}}_i\big)^n_{i = k}$: sequential exit-times from the sets $(A_i)^n_{i = k}$ in order, see Definition \ref{def:theta};
	\item $\mathcal{L}^u$: Dynkin operator, see Definition \ref{def:Dynkin operator}.
\end{enumerate}

\section{General Setting and Problem Description} \label{sec:problem}

	Consider a filtered probability space $(\Omega, \sigalg, \filtration, \PP)$ whose filtration $\filtration \Let (\sigalg_s)_{s\ge 0}$ is generated by an $\R^{d_z}$-valued process $\control{z} \Let (z_s)_{s\ge 0}$  with independent increments. Let this natural filtration be enlarged by its right-continuous completion, i.e., it satisfies the usual conditions of completeness and right continuity \cite[p.\ 48]{ref:KarShr-91}. Consider also an auxiliary subfiltration $\filtration_t \Let (\sigalg_{t,s})_{s\ge0}$, where $\sigalg_{t,s}$ is the $\PP$-completion of $\sigma\big(z_{r \mx t} - z_t, r \in [0,s]\big)$. 
	It is obvious to observe that any $\sigalg_{t,s}$-random variable is independent of $\sigalg_t$, $\sigalg_{t,s} \subseteq \sigalg_s$ with equality in case of $t=0$, and for $s \le t$, $\sigalg_{t,s}$ is the trivial $\sigma-$algebra.

	The object of our study is an $\R^d$-valued controlled random process $\big(\traj{X}{t,x}{\control{u}}{s}\big)_{s\ge t}$, initialized at $(t,x)$ under the control $\control{u} \in \controlmaps{U}_t$, where $\controlmaps{U}_t$ is the set of admissible controls at time $t$. Since the precise class of admissible controls does not play a role until Section IV we defer the formal definition of these until then. Let $T>0$ be a fixed time horizon, and let $\set{S} \Let [0,T] \times \R^d$. We assume that for every $(t,x) \in \set{S}$ and $\control{u} \in \controlmaps{U}_t$, the process $\big(\traj{X}{t,x}{\control{u}}{s}\big)_{s\ge t}$ is $\filtration_t$-adapted process whose sample paths are right continuous with left limits. We denote by $\mathcal{T}$ the collection of all $\filtration$-stopping times; for $\tau_1, \tau_2 \in \mathcal{T}$ with $\tau_1 \le \tau_2$ $\PP$-a.s.\ we let the subset $\setofst{\tau_1,\tau_2}$ denote the collection of all $\filtration_{\tau_1}$-stopping times $\tau$ such that $\tau_1 \leq \tau \leq \tau_2$ $\PP$-a.s. 


	Given sets $(W_i, G_i) \in \borel(\R^d) \times \borel(\R^d)$ for $i \in \{1, \cdots,n\}$, we are interested in a set of initial conditions $(t,x) \in \set{S}$ such that there exists an admissible control $\control{u} \in \controlmaps{U}_t$ steering the process $\traj{X}{t,x}{\control{u}}{\cdot}$ through $(W_i)_{i = 1}^n$ (``way point'' sets) while visiting $(G_i)_{i = 1}^{n}$ (``goal'' sets) in a pre-assigned order. One may pose this objective from different perspectives based on different time scheduling for the excursions between the sets. We formally introduce some of these notions which will be addressed throughout this article. 

	\begin{Def} [Motion Planning Events]
	\label{def:event}
		Consider a fixed initial condition $(t,x) \in \set{S}$ and admissible control $\control{u} \in \controlmaps{U}_t$. Given a sequence of pairs $(W_i,G_i)_{i=1}^n \subset \borel(\R^d)\times \borel(\R^d)$ and horizon times $(T_i)_{i=1}^n \subset [t,T]$, we introduce the following \emph{motion planning events}:
		{\begin{subequations}		
		\label{event}
			\begin{flalign}
				\label{path-event} \Big \{ & \traj{X}{t,x}{\control{u}}{\cdot}  \sat \big[ ( W_1 \Path G_1) \com \cdots \com (W_n \Path G_n) \big]_{\le T} \Big \} \Let 
				\\
				&  \quad \Big\{ \exists(s_i)_{i=1}^{n} \subset [t,T] ~\big |~ \notag \traj{X}{t,x}{\control{u}}{s_i} \in G_i  \quad \text{and}  
				\quad \traj{X}{t,x}{\control{u}}{r} \in W_i \setminus G_i, ~\forall r \in [s_{i-1}, s_i[, \quad \forall i \le n \Big \}, \nonumber 
				\\
				\label{reach-event} \Big \{& \traj{X}{t,x}{\control{u}}{\cdot}  \sat (W_1 \Reach{T_1} G_1) \com \cdots \com (W_n\Reach{T_n} G_n) \Big \} \Let 
				\\
				&  \quad \qquad \Big\{ \traj{X}{t,x}{\control{u}}{T_i} \in G_i \quad \text{and} \quad \traj{X}{t,x}{\control{u}}{r} \in W_i, \notag \quad \forall r \in [T_{i-1}, T_i], \quad \forall i \le n \Big \}, \nonumber	
			\end{flalign}
		\end{subequations}}
		where $s_0 = T_0 \Let t$.		
	\end{Def}

	The set in \eqref{path-event}, roughly speaking, contains the events in the underlying probability space that the trajectory $\traj{X}{t,x}{\control{u}}{\cdot}$, initialized at $(t,x) \in \set{S}$ and controlled via $\control{u} \in \controlmaps{U}_t$, succeeds in visiting $(G_i)_{i = 1}^{n}$ in a certain order, while the entire duration between the two visits to $G_{i-1}$ and $G_{i}$ is spent in $W_{i}$, all within the time horizon $T$. In other words, the journey from $G_{i-1}$ to the next destination $G_i$ must belong to $W_i$ for all $i$. Figure \ref{fig:event:path} depicts a sample path that successfully contributes to the first three phases of the excursion in the sense of \eqref{path-event}. In the case of \eqref{reach-event}, the set of paths is usually more restricted in comparison to \eqref{path-event}. Indeed, not only is the trajectory confined to $W_i$ on the way between $G_{i-1}$ and $G_i$, but also there is a time schedule $(T_i)_{i=1}^n$ that a priori forces the process to be at the goal sets $G_i$ at the specific times $T_i$ for each $i$. Figure \ref{fig:event:reach} demonstrates one sample path in which the first three phases of the excursion are successfully fulfilled.

	\begin{figure}[t!]
	\centering
		\subfigure[A sample path satisfying the first three phases of the specification in the sense of \eqref{path-event}]{\label{fig:event:path}\includegraphics[scale = 0.4]{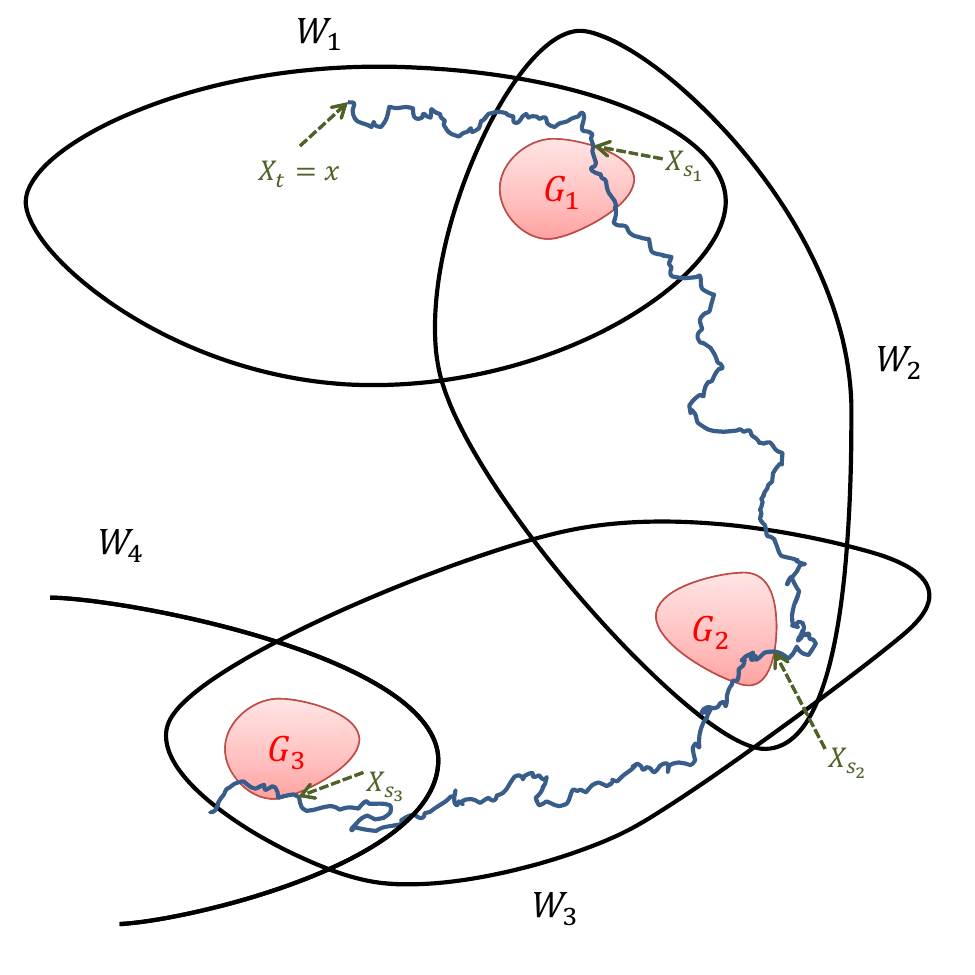}} 
		\quad
		\subfigure[A sample path satisfying the first three phases of the specification in the sense of \eqref{reach-event}]{\label{fig:event:reach}\includegraphics[scale = 0.4]{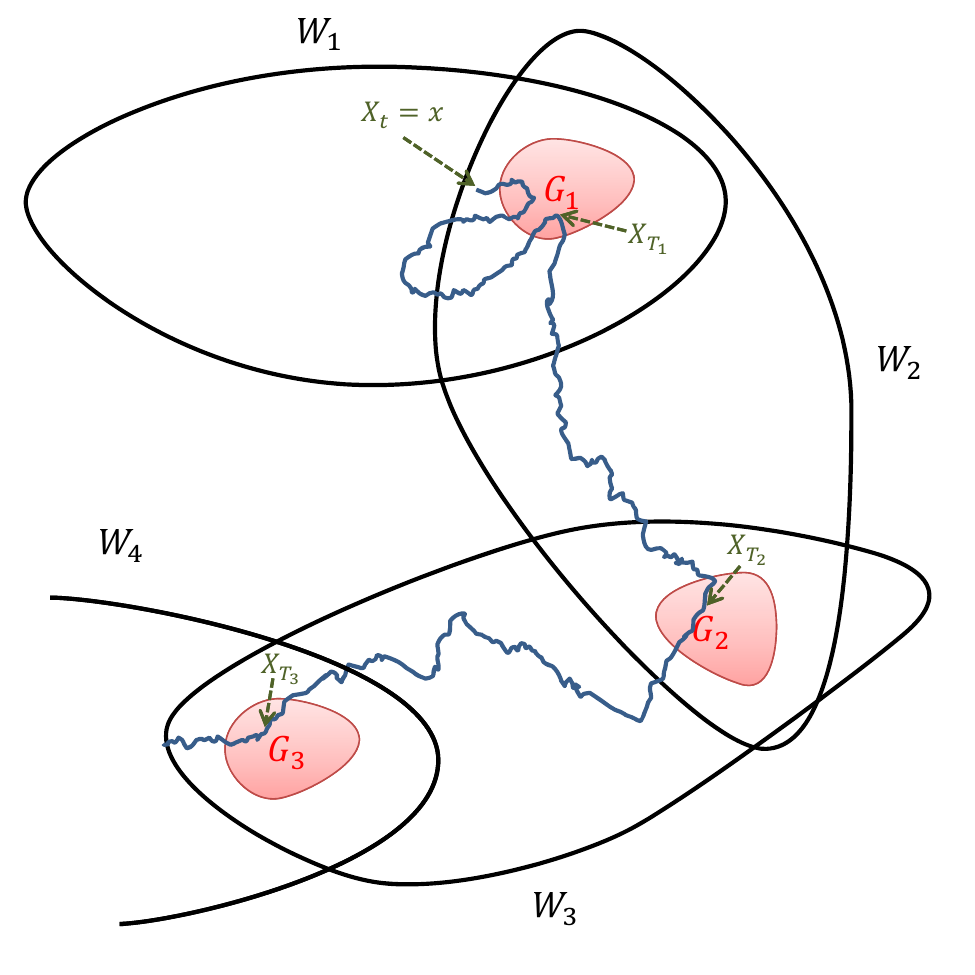}}
		\caption{Sample paths of the process $\traj{X}{t,x}{\control{u}}{\cdot}$ for a fix control $\control{u}\in \controlmaps{U}_t$}
		\label{fig:event}
	\end{figure}

	Note that once a trajectory belonging to the set in \eqref{path-event} visits $G_{i}$ for the first time, it is required to remain in $W_{i+1}$ until the next goal $G_{i+1}$ is reached, whereas a trajectory belonging to the set in definition \eqref{reach-event} may visit the destination $G_i$ several times, while staying in $W_i$ until the intermediate time schedule $T_i$. The only requirement, in contrast to \eqref{path-event}, is to confine the trajectory to be at $G_i$ at the time $T_i$. As an illustration, one can easily inspect that the sample path in Figure \ref{fig:event:reach} indeed violates the requirements of the definition \eqref{path-event} as it leaves $W_2$ after it visits $G_1$ for the first time. In other words, the definition \eqref{path-event} changes the admissible way set $W_i$ to $W_{i+1}$ \emph{immediately after} the trajectory visits $G_i$, while the definition  \eqref{reach-event} only changes the admissible way set \emph{only after} the intermediate time $T_i$ irrespective of whether the trajectory visits $G_i$ prior to $T_i$.

	
	For simplicity we may impose the following assumptions:
		\begin{As}
		\label{a:set}
		We stipulate that 
		\begin{enumerate} [label=\alph*., itemsep = 2mm] 
			\item \label{a:set:closed} The sets $(G_i)_{i=1}^n \subset \borel(\R^d)$ are closed.
			\item \label{a:set:open} The sets $(W_i)_{i=1}^n \subset \borel(\R^d)$ are open.
		\end{enumerate}
		\end{As}

	Concerning Assumption \ref{a:set}.\ref{a:set:closed}, if $G_i$ is not closed, then it is not difficult to see that there could be some continuous transitions through the boundary of $G_i$ that are not admissible in view of the definition \eqref{path-event} since the trajectory must reside in $W_i \setminus G_i$ for the whole interval $[s_{i-1}, s_i[$ and just hit $G_i$ at the time $s_i$. Notice that this is not the case for the definition \eqref{reach-event} since the trajectory only visits the sets $G_i$ at the specific times $T_i$ while any continuous transition and maneuver inside $G_i$ are allowed. Assumption \ref{a:set}.\ref{a:set:open} is rather technical and required for the analysis employed in subsequent sections.
	


	The events introduced in Definition \ref{def:event} depend, of course, on the control $\control{u} \in \controlmaps{U}$ and initial condition $(t,x)\in \set{S}$. The main objective of this article is to determine the set of initial conditions $x \in \R^d $ such that there exists an admissible control $\control{u}$ where the probability of the motion planning events is higher than a certain threshold. Let us formally introduce these sets as follows:

	\begin{Def}[Motion Planning Initial Condition Set]
	\label{def:initial set}
		Consider a fixed initial time $t \in [0,T]$. Given a sequence of set pairs $(W_i,G_i)_{i=1}^n \subset \borel(\R^d) \times \borel(\R^d)$ and horizon times $(T_i)_{i = 1}^n \subset [t,T]$, we define the following \emph{motion planning initial condition sets}:
		\begin{subequations}		
		\label{path-reach-set}
			\begin{flalign}
				\label{path-set}  \PathSet \big(t,p ; (W_i,G_i)_{i=1}^n,  T \big) & \Let 
				\Big \{ x \in \R^d ~ \big | ~ \exists \control{u} \in \controlmaps{U}_t : \\
				& \quad \PP \big\{ \traj{X}{t,x}{\control{u}}{\cdot} \sat \big[( W_1 \Path G_1) \com \cdots \com (W_n \Path G_n) \big]_{\le T} \} >p \Big\}, \nonumber
				\\
				\label{reach-set} \wt \PathSet \big(t,p ; (W_i,G_i)_{i=1}^n,(T_i)_{i=1}^n \big) & \Let \Big \{ x \in \R^d \big | \exists \control{u} \in \controlmaps{U}_t: \\
				& \quad  \PP \big\{ \traj{X}{t,x}{\control{u}}{\cdot} \sat (W_1 \Reach{T_1} G_1) \com \cdots \com (W_n\Reach{T_n} G_n)\big\}>p \Big \}. \nonumber
			\end{flalign}
		\end{subequations}	 	 
	\end{Def}


	\begin{Rem}[Stochastic Reach-Avoid Problem]
		The motion planning scenarios for only two sets $(W_1, G_1)$ basically reduce to the basic reach-avoid maneuver studied in our earlier work \cite{} by setting the reach set to $G_1$ and the avoid set to $\R^d \setminus W_1$. See also \cite{ref:GaoLygerosQuincampoix-2006} for the corresponding deterministic and \cite{ref:Summers-10} for the corresponding discrete time stochastic reach-avoid problems.
	\end{Rem}

	\begin{Rem}[Mixed Motion Planning Events]
	\label{rem:mix event}	
	One may also consider an event that consists of a mixture of the events in \eqref{event}, e.g., $(W_1 \Path G_1)_{\le T_1} \circ (W_2 \Reach{T_2} G_2)$. Following essentially the same analytical techniques as the ones proposed in subsequent sections, one can also address these mixed motion planning objectives; see \cite{ref:MohMilCha-2013} for an example of this nature.
	\end{Rem}

	\begin{Rem}[Time-varying Goal and Way Point Sets]
		In Definition \ref{def:initial set} the motion planning objective is introduced in terms of stationary (time-independent) goal and way point sets. However, note that one can always augment the state space with time, and introduce a new stochastic process $Y_t \Let [X_t^{\tr}, t]^{\tr}$. Therefore, a motion planning concerning moving sets for $X_t$ can be viewed as a motion planning with stationary sets for the process $Y_t$. 
	\end{Rem}

\section{Connection to Stochastic Optimal Control Problems} \label{sec:connection}

	In this section we establish a connection between stochastic motion planning initial condition sets $\PathSet$ and $\wt \PathSet$ of Definition \ref{def:initial set} and a class of stochastic optimal control problems involving stopping times. First, given a sequence of sets we introduce a sequence of random times that corresponds to the times that the process $\traj{X}{t,x}{\control{u}}{\cdot}$ exits each set in the sequence for the first time.

	\begin{Def}[Sequential Exit-Time]
	\label{def:theta}
		Given an initial condition $(t,x) \in \set{S}$ and a sequence of measurable sets $(A_i)_{i=k}^n \subset \borel(\R^d)$, the sequence of random times $\big(\Theta^{A_{k:n}}_i \big)_{i=k}^n $ defined\footnote{By convention, $\inf \emptyset = \infty$.} by
		\begin{align*}
		\begin{cases}\vspace{1mm}
			\Theta^{A_{k:n}}_i(t,x) \Let \inf \big\{ r\ge \Theta^{A_{k:n}}_{i-1}(t,x) ~:~ \traj{X}{t,x}{\control{u}}{r} \notin A_i \big\}, \qquad
			\\ \Theta^{A_{k:n}}_{k-1}(t,x) \Let t,
		\end{cases} 
		\end{align*}
		is called the \emph{sequential exit-time} through the set $A_k$ to $A_n$.
	\end{Def}

	Note that the sequential exit-time $\Theta^{A_{k:n}}_i$ depends on the control $\control{u}$ in addition to the initial condition $(t,x)$, but here and later in the sequel we shall suppress this dependence. For notational simplicity, we may also drop $(t,x)$ in subsequent sections. We refer the interested reader to Lemma \ref{fact:measurability} showing that these sequential stopping times are indeed well-defined.

	\begin{figure}[t!]
		\centering
				\centering		
				\includegraphics[scale= 0.4]{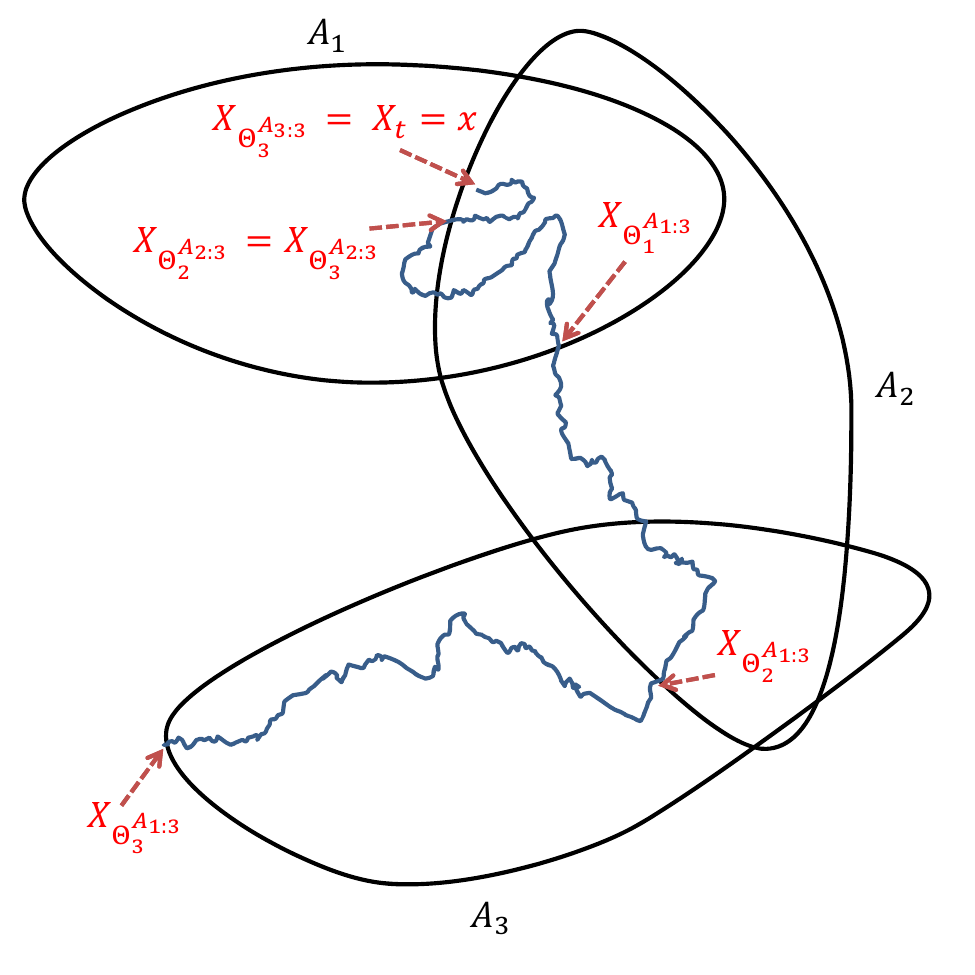}
				\caption{{\footnotesize Sequential exit-times of a sample path through the sets $(A_i)_{i=k}^3$ for different values of $k$}}
				\label{fig:theta_A}
	\end{figure}	

	In Figure \ref{fig:theta_A} a sample path of the process $\traj{X}{t,x}{\control{u}}{\cdot}$ along with the sequential exit-times $(\Theta_i^{A_{k:3}})_{i=k}^n$ is depicted for different $k \in \{1,2,3\}$. Note that since the initial condition $x$ does not belongs to $A_3$, the first exit-time of the set $A_3$ is indeed the start time $t$, i.e., $\Theta_3^{A_{3:3}} = t$. Let us highlight the difference between stopping times $\Theta_2^{A_{1:3}}$ and $\Theta_2^{A_{2:3}}$. The former is the first exit-time of the set $A_2$ after the time that the process leaves $A_1$, whereas the latter is the first exit-time of the set $A_2$ from the very beginning. {In Section \ref{sec:application} we shall see that these differences will lead to different definitions of value functions in order to derive a dynamic programming argument.}
	

	Given $(W_i,G_i, T_i)_{i=1}^n \subset \borel(\R^d)\times\borel(\R^d)\times [t,T]$, we introduce two value functions $V, \wt V:\set{S} \ra [0,1]$ defined by
	\begin{subequations}
	\label{V's}
		\begin{align}
		\label{V}	
		&\left \{
		\begin{array}{l}\vspace{3mm}
			V(t,x) \Let \sup\limits_{\control{u} \in \controlmaps{U}_t} \E{\prod\limits_{i = 1}^{n} \ind{G_i} \big(\traj{X}{t,x}{\control{u}}{\eta_i}\big)}, \\
			\eta_i \Let \Theta^{B_{1:n}}_i \mn T, \qquad B_i \Let W_i \setminus G_i,
		\end{array}
		\right.\\
		\label{Vtilde}	
		&\left \{
		\begin{array}{l}\vspace{3mm}
 		\wt V(t,x) \Let \sup\limits_{\control{u} \in \controlmaps{U}_t} \E{\prod\limits_{i = 1}^{n} \ind{G_{i} \cap W_i} \big(\traj{X}{t,x}{\control{u}}{\wt \eta_i}\big)}, \\
 		\wt \eta_i \Let \Theta^{W_{1:n}}_i \mn T_i,
		\end{array}
		\right.
		\end{align}
	\end{subequations}
	where $\Theta^{W_{1:n}}_i, \Theta^{B_{1:n}}_i$ are the sequential exit-times in the sense of Definition \ref{def:theta}. Figure \ref{fig:theta:path} and \ref{fig:theta:reach} illustrate the sequential exit-times corresponding to the sets $B_i$ and $W_i$, respectively. The main result of this section, Theorem \ref{thm:SOC} below, establishes a connection from the sets $\PathSet, \wt \PathSet$ and superlevel sets of the value functions $V$ and $\wt V$.
	
	\begin{figure}[t!]
	\centering
		\subfigure[Sequential exit-times related to motion planning event \eqref{path-event}]{\label{fig:theta:path}\includegraphics[scale = 0.39]{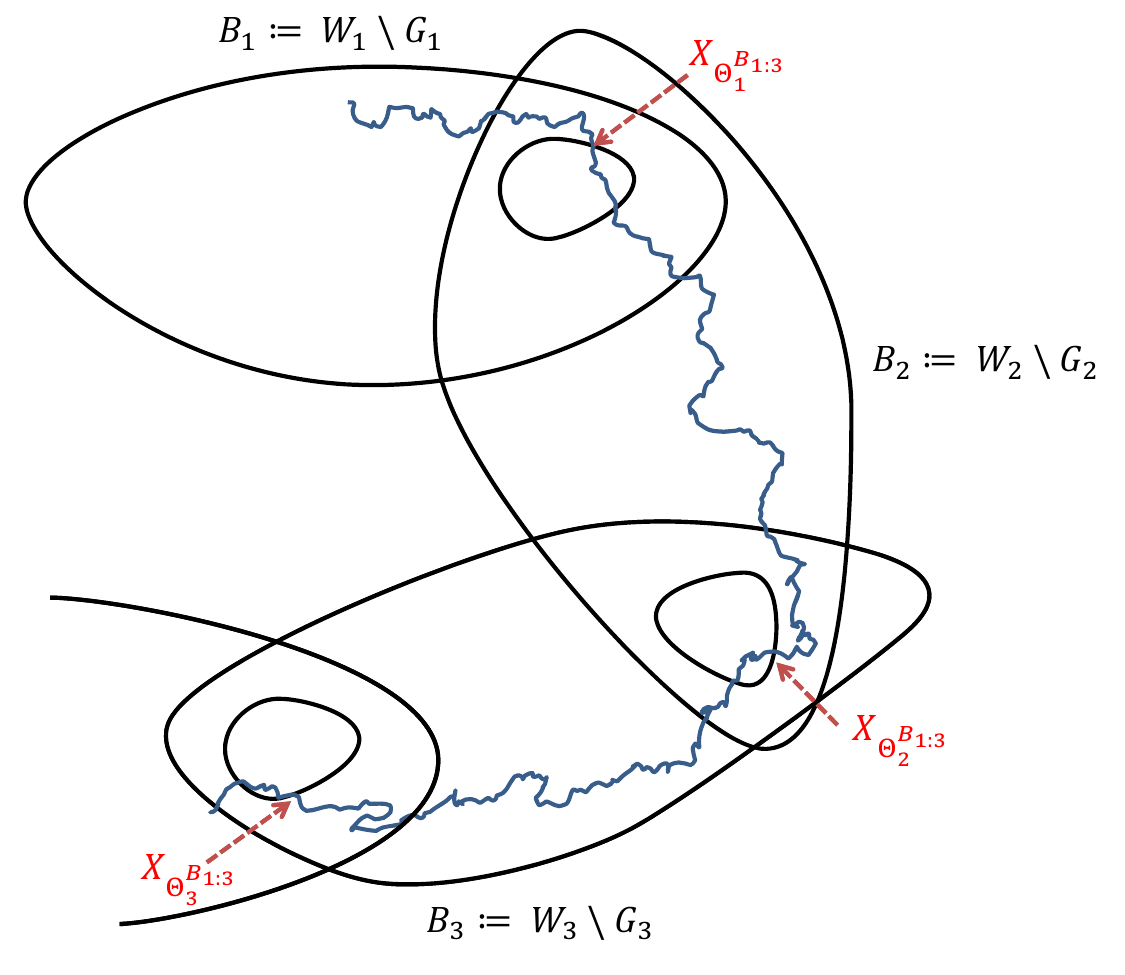}} 
		\quad
		\subfigure[Sequential exit-times related to motion planning event \eqref{reach-event}]{\label{fig:theta:reach}\includegraphics[scale = 0.39]{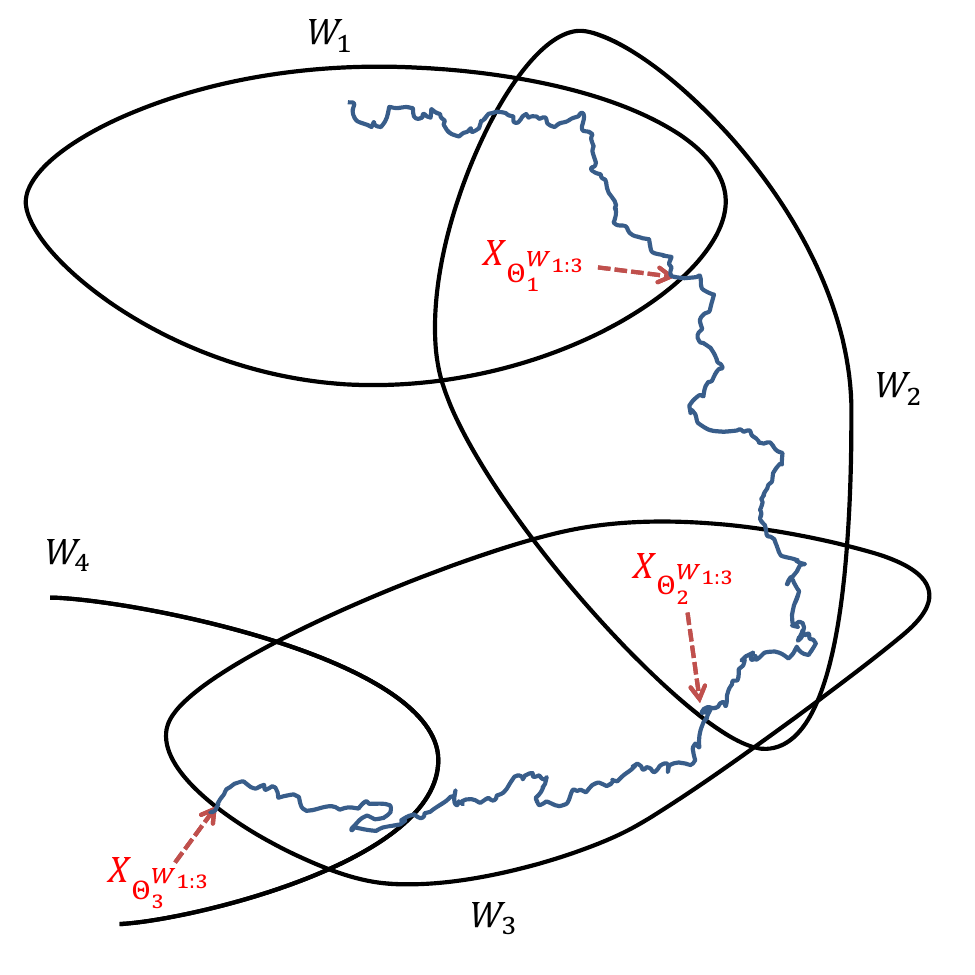}}
		\caption{Sequential exit-times corresponding to different motion planning events as introduced in \eqref{event}}
		\label{fig:theta}
	\end{figure}

	\begin{Thm}[Optimal Control Characterization]
	\label{thm:SOC}
		Fix a probability level $p \in [0,1]$, a sequence of set pairs $(W_i,G_i)_{i=1}^n \subset \borel(\R^d) \times \borel(\R^d)$, an initial time $t \in [0,T]$, and intermediate times $(T_i)_{i=1}^n \subset [t,T]$. Then,
		\begin{align}
		\label{prop V}  \PathSet \big(t,p ; (W_i,G_i)_{i=0}^n, T \big) & = \big\{ x\in \R^d ~ \big | ~ V(t,x) > p \big \}.	
		\end{align}
		Moreover, suppose Assumption \ref{a:set}.\ref{a:set:open} holds. Then,
		\begin{align}
		\label{prop Vtilde} \wt \PathSet \big(t,p ; (W_i,G_i)_{i=0}^n, (T_i)_{i=1}^n \big) & = \big\{ x\in \R^d ~ \big | ~ \wt V(t,x) > p \big \}, 
		\end{align}
		where the value functions $V$ and $\wt V$ are as defined in \eqref{V's}.
	\end{Thm}

	\begin{proof} See Appendix \ref{app-A}.\end{proof}
	
	Intuitively speaking, observe that the value functions \eqref{V's} consist of a sequence of indicator functions, where the reward is $1$ when the corresponding phase (i.e., reaching $G_i$ while staying in $W_i$) of motion planning is fulfilled, while the reward is $0$ if it fails. Let us also highlight that the difference between the time schedule between the two motion planning problems in \eqref{V's} is captured via the stopping times $\eta_i$ and $\wt{\eta}_i$: the former refers to the first time to leave $W_i$ or hit $G_i$ before $T$, and the latter only considers the exit time from $W_i$ prior to $T_i$. Hence, the product of the indicators evaluates to $1$ if and only if the entire journey comprising $n$ phases is successfully accomplished. In this light, taking expectations yields the probability of the desired event, and the supremum over admissible controls leads to the assertion that there exists a control for which the desired properties hold.
	

\section{Dynamic Programming Principle} \label{sec:DPP}

	The objective of this section is to derive a DPP for the value functions $V$ and $\wt V$ introduced in \eqref{V's}. The DPP provides a bridge between the theoretical characterization of the solution to our motion planning problem through value functions (Section \ref{sec:connection}) and explicit characterizations of these value functions using, for example, PDEs (Section \ref{sec:application}), which can then be used to solve the original problem numerically.
	
	Let $(T_i)_{i=1}^n \subset [0,T]$ be a sequence of times, $(A_i)_{i=1}^n \subset \borel(\R^d)$ be a sequence of open sets, and $\ell_i : \R^r \ra \R$ for $i = 1,\cdots,n$ be a sequence of measurable and bounded payoff functions. We define the sequence of value functions $V_k: [0,T] \times \R^d \ra \R^d$ for each \(k \in  \{1, \ldots, n\} \) as
	\begin{align}
		\label{Vk}
		&\left \{
		\begin{array}{l}\vspace{3mm}
		V_k(t,x) \Let \sup\limits_{\control{u} \in \controlmaps{U}_t} \EE \Big[ \prod\limits_{i=k}^n \ell_i\big(\traj{X}{t,x}{\control{u}}{\tau_i^k}\big)\Big], \\
		\tau_i^k(t,x) \Let \Theta^{A_{k:n}}_i(t,x) \mn T_i, \quad i \in \{k,\cdots,n\},
		\end{array}
		\right. 
	\end{align}
	where the stopping times $(\Theta^{W_{k:n}}_i)_{i=k}^n$ are sequential exit-times in the sense of Definition \ref{def:theta}. Recall that the sequential exit-times of $V_k$ correspond to an excursion through the sets $(A_i)_{i=k}^n$ irrespective of the first $(k-1)$ sets. It is straightforward to observe that the value function $V$ \big (resp.\ $\wt V$ \big ) in \eqref{V's} is a particular case of the value function $V_1$ defined as in \eqref{Vk} when $A_i \Let W_i \setminus G_i$ \big (resp.\ $A_i \Let W_i$ \big), $\ell_i \Let \ind{G_i}$ \big(resp.\ $\ell_i \Let \ind{G_i \cap W_i}$\big), and $T_i \Let T$. 
	
	To state the main result of this section, Theorem \ref{thm:DPP} below, some technical definitions and assumptions concerning the stochastic processes $\traj{X}{t,x}{\control{u}}{\cdot}$, the admissible controls $\controlmaps{U}_t$, and the payoff functions $\ell_i$ are needed:

	\begin{As}
	\label{a:DPP}
		For all $(t,x) \in \set{S}$, $\theta \in \setofst{t,T}$, and $\control{u}, \control{v} \in \controlmaps{U}_t$, we stipulate the following assumptions on 
		\begin{enumerate}[label=\alph*., itemsep = 2mm] 

			\item \textbf{Admissible controls:} \label{a:control} \\
				$\controlmaps{U}_t$ is the set of $\filtration_t$-progressively measurable processes taking values in a given control set. That is, the value of $\control{u} \Let (u_s)_{s \in [0,T]}$ at time $s$ is a measurable mapping $(z_{r \mx t} - z_t)_{r\in[0,s]} \mapsto u_s$ for all $s \in [0,T]$, see \cite[Def.\ 1.11, p.\ 4]{ref:KarShr-91} for more details.

			\item \textbf{Stochastic process:} \label{a:process}
			  	\begin{enumerate}[label=\roman*., itemsep = 2mm] 
					\item \label{a:process:causal} \emph{Causality:} 
						If $\ind{[t,\theta]} \control{u} = \ind{[t,\theta]}\control{v}$, then we have $\ind{[t,\theta]}\traj{X}{t,x}{\control{u}}{\cdot} = \ind{[t,\theta]} \traj{X}{t,x}{\control{v}}{\cdot}$. 
					
					\item \label{a:process:markov} \emph{Strong Markov property:} For each $\omega \in \Omega$ and the sample path $(z_r)_{r \in [0,\theta(\omega)]}$ up to the stopping time $\theta$, let the random control $\control{u}_\theta \in \controlmaps{U}_{\theta(\omega)}$ be the mapping 
					{\small $(z_{\cdot \mx \theta(\omega)} - z_{\theta(\omega)}) \mapsto \control{u}(z_{\cdot \mn \theta(\omega)} + z_{\cdot \mx \theta(\omega)} - z_{\theta(\omega)}) =: \control{u}_\theta$}.\footnote{Notice that $z_{\cdot} \equiv z_{\cdot \mn \theta(\omega)} + z_{\cdot \mx \theta(\omega)} - z_{\theta(\omega)}$. Thus, the randomness of $\control{u}_\theta$ is referred to the term $z_{.\mn \theta(\omega)}$.} Then, it holds with probability one that 
					\begin{align*} 
						\EE \Big[ & \ell \big( \traj{X}{t,x}{\control{u}}{\theta+s} \big) ~\Big|~ \sigalg_\theta \Big] = \EE \Big[ \ell\big( \traj{X}{\bar t,\bar x}{\bar {\control{u}}}{\theta+s} \big) ~ \Big|~ \bar t = \theta, \bar x = \traj{X}{t,x}{\control{u}}{\theta}, \bar {\control{u}} = \control{u}_\theta \Big ], 
					\end{align*} 					
					for all bounded measurable functions $\ell : \R^d \ra \R$ and $s\ge 0$. 
					
					\item  \label{a:process:exit-time} \emph{Continuity of the exit-times:} 
						Given initial condition $(t_0,x_0) \in \set{S}$, for all $k \in \{1,\cdots,n\}$ and $i \in \{ k, \cdots, n\}$ the stochastic mapping $(t,x) \mapsto \traj{X}{t,x}{\control{u}}{\tau_i^k(t,x)}$ is $\PP$-a.s.\ continuous at $(t_0,x_0)$ where the stopping times $\tau_i^k$ are defined as in \eqref{Vk}.
			  	\end{enumerate}		  
			  	
			\item \textbf{Payoff functions:} \label{a:payoff} \\
				$(\ell_i)_{i=1}^n$ are lower semicontinuous for all $i \le n$.
		\end{enumerate}	
	\end{As}

	\begin{Rem}
	\label{rem:DPP}
		Some remarks on the above assumptions are in order:
		\begin{itemize}[label=$\circ$, itemsep = 2mm]
			\item Assumption \ref{a:DPP}.\ref{a:control} implies that the admissible controls  $\control{u} \in \controlmaps{U}_t$ take action at time $t$ independent of future information arriving at $s>t$. This is known as the \emph{non-anticipative} strategy and is a standard assumption in stochastic optimal control \cite{ref:Borkar-05}.
		
			\item Assumption \ref{a:DPP}.\ref{a:process} imposes three constraints on the process $\traj{X}{t,x}{\control{u}}{\cdot}$ defined on the prescribed probability space: {i)} causality of the solution processes for a given admissible control {ii)} strong Markov property {iii)} continuity of exit-time. The causality property is always satisfied in practical applications; uniqueness of the solution process $\traj{X}{t,x}{\control{u}}{\cdot}$ under any admissible control process $\control{u}$ guarantees it. The class of Strong Markov processes is fairly large; for instance, it contains the solution of SDEs under some mild assumptions on the drift and diffusion terms \cite[Thm.\ 2.9.4]{Krylov_ControlledDiffusionProcesses}. The almost sure continuity of the exit-time with respect to the initial condition of the process is the only potentially restrictive part of the assumptions. Note that this condition does not always hold even for deterministic processes with continuous trajectories. One may need to impose conditions on the process and possibly the sets involved in motion planning in order to satisfy continuity of the mapping $(t,x) \mapsto \traj{X}{t,x}{\control{u}}{\tau_i^k(t,x)}$ at the given initial condition with probability one. We shall elaborate on this issue and its ramifications for a class of diffusion processes in Section \ref{sec:application}.
			
			\item Assumption \ref{a:DPP}.\ref{a:payoff} imposes a fairly standard assumption on the payoff functions. In case $\ell_i$ is the indicator function of a given set, for example in \eqref{V's}, this assumption requires the set to be open. This issue will be addressed in more details in Subsection \ref{subsec:numerical}, in particular to bring a reconciliation with Assumption \ref{a:set}.\ref{a:set:closed}.
		\end{itemize}
	\end{Rem}

	Let function $J_k : \set{S}\times \controlmaps{U}_0 \ra \R$ be
	\begin{equation}
	\label{J}
		J_k(t,x; \control{u}) \Let \EE \Big[ \prod_{i=k}^n \ell_i\big(\traj{X}{t,x}{\control{u}}{\tau_i^k}\big)\Big], 
	\end{equation}
	where $\big(\tau_i^k\big)_{i = k}^n$ are as defined in \eqref{Vk}. 
	

	The following Theorem, the main result of this section, establishes a dynamic programming argument for the value function $V_k$ in terms of the ``successor'' value functions $(V_j)_{j = k+1}^{n}$, all defined as in \eqref{Vk}. For ease of notation, we shall introduce deterministic times $\tau^k_{k-1}, \tau^k_{n+1}$, and a trivial constant value function $V_{n+1}$.

	\begin{Thm}[Dynamic Programming Principle]
	\label{thm:DPP}
		Consider the value functions $(V_j)_{j = 1}^n$ and the sequential stopping times $( \tau_j^k )_{j=k}^n$ introduced in \eqref{Vk} where $k \in \{1,\cdots,n\}$. Under Assumptions \ref{a:DPP}, for all $(t,x) \in \set{S}$ and family of stopping times $\{ \theta^{\control{u}}, \control{u} \in \controlmaps{U}_t\} \subset \setofst{t,T}$, we have 
		
		\vspace{-4mm}
		\begin{small}
		\begin{subequations}		
		\label{DPP}			
			\begin{align}			
				\label{DPP-sup} V_k(t,x) & \le 
				\sup_{\control{u} \in \controlmaps{U}_t} \EE \bigg[ \sum_{j = k}^{n+1} \ind{\{\tau_{j-1}^k \le \theta^{\control{u}} < \tau_j^k\}} {V^*_j} \big(\theta^{\control{u}}, \traj{X}{t,x}{\control{u}}{\theta^{\control{u}}}\big) \prod_{i = k}^{j-1}\ell_i\big( \traj{X}{t,x}{\control{u}}{\tau_i^k}\big)   \bigg] ,\\
				\label{DPP-sub} V_k(t,x)  & \ge  
				\sup_{\control{u} \in \controlmaps{U}_t} \EE \bigg[ \sum_{j = k}^{n+1} \ind{\{\tau_{j-1}^k \le \theta^{\control{u}} < \tau_j^k \}} {V_j}_* \big(\theta^{\control{u}}, \traj{X}{t,x}{\control{u}}{\theta^{\control{u}}}\big) \prod_{i = k}^{j-1}\ell_i\big( \traj{X}{t,x}{\control{u}}{\tau_i^k}\big) \bigg] , 
			\end{align}
		\end{subequations}
		\end{small}
		where ${V^*_j}$ and ${V_j}_*$ are, respectively, the upper and the lower semicontinuous envelope of $V_j$, $\tau_{k-1}^k \Let t$, $V_{n+1} \equiv 1$, and $\tau_{n+1}^k$ is any constant time strictly greater than $T$, say $\tau_{n+1}^k \Let T+1$.			
	\end{Thm}

\begin{proof} See Appendix \ref{app-A}. \end{proof}

	In our context the DPP proposed in Theorem \ref{thm:DPP} allows us to characterize the value function \eqref{Vk} through a sequence of value functions $(V_j)_{j = k+1}^{n}$. That is, \eqref{DPP-sup} and \eqref{DPP-sub} impose mutual constraints on a value function and subsequent value functions in the sequence. Moreover, the last function in the sequence is fixed to a constant by construction. Therefore, assuming that an algorithm to sequentially solve for these mutual constraints can be established, one could in principle use it to compute all value functions in the sequence and solve the original motion planning problem. In Section \ref{sec:application} we show how, for a class of controlled diffusion processes, the constraints imposed by \eqref{DPP} reduce to PDEs that the value functions need to satisfy. This enables the use of numerical PDE solution algorithms for this purpose.
	
	\begin{Rem}[Semicontinuity and Measurability]
	\label{rem:measurability}
		Theorem \ref{thm:DPP} introduces DPP's in the spirit of \cite{BouchardTouzi_WeakDPP} but in an exit-time framework, which is in a weaker sense than the standard DPP in stochastic optimal control problems \cite[Section IV.7]{SonerBook}. Namely, the DPP only requires a semicontinuity of the payoff functions while it avoids the verification of the measurability of the value functions $V_k$ in \eqref{V's}. Notice that in general this measurability issue is non-trivial due to the supremum operation running over possibly uncountably many controls.
	\end{Rem}

\section{The Case of Controlled Diffusions} \label{sec:application}
	
	In this section we come to the last step in our construction. We demonstrate how the DPP derived in Section \ref{sec:DPP}, in the context of controlled diffusion processes, gives rise to a series of PDE's. Each PDE is understood in the discontinuous viscosity sense with boundary conditions in both Dirichlet (pointwise) and viscosity senses. This paves the way for using PDE numerical solvers to numerically approximate the solution of our original motion planning problem for specific examples. We demonstrate an instance of such an example in Section \ref{sec:simulation}. 
	
	We first introduce formally the standard probability space setup for SDEs, then proceed with some preliminaries to ensure that the requirements of the proposed DPP, Assumptions \ref{a:DPP}, hold. The section consists of subsections concerning PDE derivation and boundary conditions along with further discussions on how to deploy existing PDE solvers to numerically compute our PDE characterization. 

	Let $\Omega$ be $\mathcal{C}\big([0,T],\R ^{z_d} \big)$, the set of continuous functions from $[0,T]$ to $\R^{z_d}$, and let $(z_t)_{t \ge 0}$ be the canonical process, i.e., $z_t(\omega) \Let \omega_t $. We consider $\PP$ as the Wiener measure on the filtered probability space $(\Omega, \sigalg, \filtration)$, where $\filtration$ is the smallest right continuous filtration on $\Omega$ to which the process $(z_t)_{t\ge0}$ is adapted. Let us recall that $\filtration_t \Let (\sigalg_{t,s})_{s\ge0} $ is the auxiliary subfiltration defined as $\sigalg_{t,s} \Let \sigma\big(z_{r \mx t} - z_t, r \in [0,s]\big)$. Let $\set{U}\subset \R^{d_u}$ be a control set, and $\controlmaps{U}_t$ denote the set of all $\filtration_t$- progressively measurable mappings into $\set{U}$. For every $\control{u} = (u_t)_{t \ge 0}$ we consider the $\R^d$-valued SDE\footnote{We slightly abuse notation and earlier used $\sigma$ for the sigma algebra as well. However, it will be always clear from the context to which $\sigma$ we refer.}
	\begin{equation}
	\label{SDE}
		\diff X_s = f(X_s, u_s)\,\diff s + \sigma(X_s, u_s)\,\diff W_s,\qquad X_t = x, 
	\end{equation}
	where $f:\R^d\times \set{U}\ra\R^d$ and $\sigma:\R^d\times\set{U}\ra\R^{d \times d_z}$ are measurable functions, and $W_s \Let z_s$ is the canonical process.
	
	\begin{As}
	\label{a:SDE}
		We stipulate that
		\begin{enumerate}[label=\alph*.] 
			\item \label{a:SDE:compact} $\set{U}\subset\R^m$ is compact;
			\item \label{a:SDE:lip} $f$ and $\sigma$ are continuous and Lipschitz in first argument uniformly with respect to the second;
			\item \label{a:SDE:nondegenerate} The diffusion term $\sigma$ of the SDE (\ref{SDE}) is uniformly non-degenerate, i.e., there exists $\delta>0$ such that for all $x \in \R^{d}$ and $u \in \set{U}$, $\| \sigma(x,u) \sigma^{\top}(x,u) \| > \delta$.
		\end{enumerate}
	\end{As}
	It is well-known that under Assumptions \ref{a:SDE}.\ref{a:SDE:compact} and \ref{a:SDE}.\ref{a:SDE:lip} there exits a unique strong solution to the SDE \eqref{SDE} \cite{ref:Borkar-05}; let us denote it by $\big(\traj{X}{t,x}{\control{u}}{s}\big)_{s \ge t}$. For future notational simplicity, we slightly modify the definition of $\traj{X}{t, x}{\control{u}}{s}$, and extend it to the whole interval $[0,T]$ where $\traj{X}{t, x}{\control{u}}{s} \Let x$ for all $s$ in $ [0, t]$.

	In addition to Assumptions \ref{a:SDE} on the SDE \eqref{SDE}, we impose the following assumption on the motion planning sets that allows us to guarantee the continuity of sequential exit-times, as required for the DPP obtained in the preceding section.

	\begin{As} 
	\label{a:SDE:set}
	The open sets $(A_i)_{i=1}^n$ satisfy the exterior cone condition, i.e., for every $i \in \{1,\cdots, n\}$, there are positive constants $h$, $r$ an $\R^d$-value bounded map $\eta: A_i^c \ra \R^d$ such that
	\begin{equation*}
		\ball{B}_{rt}( x+\eta(x)t) \subset A_i^c \qquad \text{for all $x \in A_i^c$ and $t \in (0,h]$ }
	\end{equation*}
	where $\ball{B}_r(x)$ denotes an open ball centered at $x$ and radius $r$ and $A_i^c$ stands for the complement of the set $A_i$.
	\end{As}
		
	\begin{Rem}[Smooth Boundary]
	If the set $A_i$ is bounded and its boundary $\partial A_i$ is smooth, then Assumption \ref{a:SDE:set} holds. Furthermore, boundaries with corners may also satisfy Assumption \ref{a:SDE:set}; Figure \ref{fig:boundary} depicts two different examples.
	\end{Rem}	
	\begin{figure}[t!]
		\centering
		\subfigure[Exterior cone condition holds at every point of the boundary.]{\label{fig:ExtCone}\includegraphics[scale = 0.5]{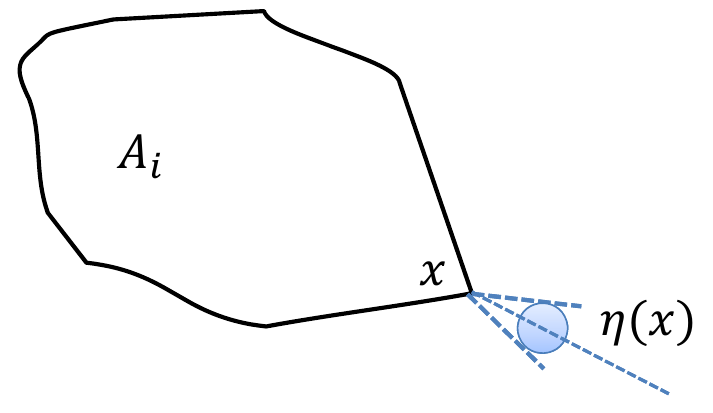}} 
		\qquad
		\subfigure[exterior cone condition fails at the point $x$---the only possible exterior cone at $x$ is a line.]{\label{fig:NoIntCone}\includegraphics[scale = 0.5]{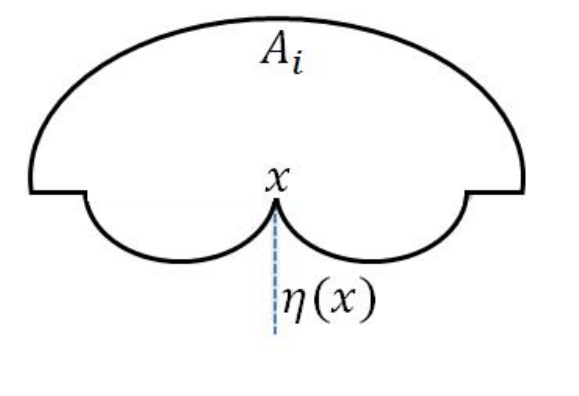}}
		\caption{Exterior cone condition of the boundary}
		\label{fig:boundary}
	\end{figure}	
	
\subsection{Sequential PDEs}
\label{subsec:PDE}
	In the context of SDEs, we show how the abstract DPP of Theorem \ref{thm:DPP} results in a sequence of PDEs, to be interpreted in the sense of discontinuous viscosity solutions; for the general theory of viscosity solutions we refer to \cite{Crandali_Ishii_Lions_VoscositySolutions} and \cite{SonerBook}. For numerical solutions to these PDEs, one also needs appropriate boundary conditions which is addressed in the next subsection. 
	
	To apply the proposed DPP, one has to make sure that Assumptions \ref{a:DPP} are satisfied. As pointed out in Remark \ref{rem:DPP}, the only nontrivial assumption in the context of SDEs is Assumption \ref{a:DPP}.\ref{a:process}\ref{a:process:exit-time} The following proposition addresses this issue, and allows us to employ the DPP of Theorem \ref{thm:DPP} for the main result of this subsection. 

	\begin{Prop}[Stopped-Process Continuity]
	\label{prop:exit-time continuity}
		Consider the SDE \eqref{SDE} where Assumptions \ref{a:SDE} hold. Suppose the open sets $(A_i)_{i=1}^n \subset \borel(\R^d)$ satisfy the exterior cone condition in Assumption \ref{a:SDE:set}. Let $\big(\Theta^{A_{1:n}}_i \big)_{i=1}^n $ be the respective sequential exit-times as defined in Definition \ref{def:theta}. Given intermediate times $(T_i)_{i=1}^n$, and control $\control{u}\in \controlmaps{U}_t$, consider the stopping time $\tau_i \Let \Theta^{A_{1:n}}_i \mn T_i$. Then, for any $i \in \{1.\cdots,n\}$, initial condition $(t,x) \in \set{S}$, and sequence of initial conditions $(t_m,x_m) \ra (t,x)$, we have
		\begin{align*}
		 	\lim_{m \ra \infty} & \tau_i(t_m,x_m) = \tau_i(t,x) \quad \PP \text{-a.s.}, 
		\end{align*}
		As a consequence, the stochastic mapping $(t,x) \mapsto \traj{X}{t,x}{\control{u}}{\tau_i(t,x)}$ is continuous with probability one, i.e., $\lim\limits_{m \ra \infty} \traj{X}{t_m,x_m}{\control{u}}{\tau_i(t_m,x_m)} = \traj{X}{t,x}{\control{u}}{\tau_i(t,x)}$ $\PP$-a.s.\ for all $i$.
	\end{Prop}
	
	\begin{proof} See Appendix \ref{app-B}.\end{proof} 
	
	\begin{Def}[Dynkin Operator]
	\label{def:Dynkin operator}
		Given $u\in \set{U}$, we denote by $\mathcal{L}^u$ the \emph{Dynkin} operator (also known as the infinitesimal generator \cite[Chap.\ 5]{ref:KarShr-91}) associated to the SDE \eqref{SDE} as
			\begin{align*}
				\mathcal{L}^u \Phi(t,x) := \partial_t \Phi(t,x) & + f(x,u).\partial_x \Phi(t,x) + \frac{1}{2}\text{Tr}[\sigma(x,u) \sigma^\top(x,u) \partial_{x}^2 \Phi(t,x)],
			\end{align*}
		where $\Phi$ is a real-valued function smooth on the interior of $\set{S}$, with $\partial_t \Phi$ and $\partial_x \Phi$ denoting the partial derivatives with respect to $t$ and $x$, respectively, and $\partial^2_x \Phi$ denoting the Hessian matrix with respect to $x$. 
	\end{Def}

	Theorem \ref{thm:DPE} is the main result of this subsection, which provides a characterization of the value functions $V_k$ in terms of Dynkin operator in Definition \ref{def:Dynkin operator} in the interior of the set of interest, i.e., $[0,T_k[ \times A_k$. We refer to \cite[Thm.\ 17.23]{ref:Kallenberg-97} for details on the above differential operator. 
	
	\begin{Thm}[Dynamic Programming Equation]
	\label{thm:DPE}
		Consider the system \eqref{SDE}, and suppose that Assumptions \ref{a:SDE} hold. Let the value functions $V_k : \set{S} \ra \R^d$ be as defined in \eqref{Vk}, where the sets $(A_i)_{i=1}^n$ satisfy Assumption \ref{a:SDE:set}, and the payoff functions $(\ell_i)_{i=1}^n$ are all lower semicontinuous. Then,
		\begin{itemize}[label=$\circ$] 
			\item ${V_k}_*$ is a viscosity supersolution of
				\begin{equation*}
				\label{supersolution V1}
					-\sup_{u \in \set{U}} \mathcal{L}^u {V_k}_*(t,x) \geq 0  \qquad \text{on} \quad [0,T_k[ \times A_k;
				\end{equation*}
			\item ${V^*_k}$ is a viscosity subsolution of
				\begin{equation*}
				\label{supersolution}
					-\sup_{u \in \set{U}} \mathcal{L}^u {V^*_k}(t,x) \leq 0  \qquad \text{on} \quad [0,T_k[ \times A_k.
				\end{equation*}
		\end{itemize}
	\end{Thm}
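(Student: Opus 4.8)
The plan is to deduce the two viscosity inequalities directly from the weak DPP of Theorem~\ref{thm:DPP}, exploiting that its two halves are tailored to the two halves of the claim: the lower bound \eqref{DPP-sub}, which carries the lower envelopes ${V_j}_*$, will produce the supersolution property of ${V_k}_*$, whereas the upper bound \eqref{DPP-sup}, which carries the upper envelopes $V^*_j$, will produce the subsolution property of $V^*_k$. Before invoking Theorem~\ref{thm:DPP} I would check that Assumptions~\ref{a:DPP} hold in the diffusion setting: non-anticipativity of $\controlmaps{U}_t$ and causality are intrinsic to the SDE \eqref{SDE}, the strong Markov property holds for its solutions under Assumptions~\ref{a:SDE}, and the only delicate hypothesis---almost sure continuity of $(t,x)\mapsto\traj{X}{t,x}{\control{u}}{\tau_i^k}$---is exactly what Proposition~\ref{prop:exit-time continuity} guarantees through the exterior cone condition and uniform non-degeneracy.

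The crucial observation is that on the open region $[0,T_k[\times A_k$ the DPP collapses to a one-step relation. Fix $(t_0,x_0)$ in this region and a space--time ball $N$ with $\ol N\subset[0,T_k[\times A_k$. For any control $\control{u}$ let $\theta^{\control{u}}$ be the first exit of $(s,\traj{X}{t_0,x_0}{\control{u}}{s})$ from $N$; by continuity of the diffusion the process lies in $\ol N\subset A_k$ and $s<T_k$ up to $\theta^{\control{u}}$, so $\theta^{\control{u}}<\tau_k^k=\Theta^{A_{k:n}}_k\mn T_k$. Consequently, in the sums of \eqref{DPP} only the index $j=k$ survives, the attached product $\prod_{i=k}^{k-1}$ is empty and equals $1$, and the DPP reduces to $V_k(t_0,x_0)\le\sup_{\control{u}}\E{V^*_k(\theta^{\control{u}},\traj{X}{t_0,x_0}{\control{u}}{\theta^{\control{u}}})}$ together with the analogous lower bound carrying ${V_k}_*$. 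These are precisely the one-step inequalities on which the classical test-function argument runs.

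For the supersolution property I would take $\varphi$ smooth such that ${V_k}_*-\varphi$ has a strict minimum equal to $0$ at $(t_0,x_0)$, and argue by contradiction assuming $\mathcal{L}^{u_0}\varphi(t_0,x_0)>0$ for some $u_0\in\set{U}$. Continuity makes $\mathcal{L}^{u_0}\varphi>0$ on a ball $N$, while strictness of the minimum yields a gap ${V_k}_*\ge\varphi+\gamma$ on $\partial N$ for some $\gamma>0$. Picking $(t_m,x_m)\to(t_0,x_0)$ with $V_k(t_m,x_m)\to{V_k}_*(t_0,x_0)$, applying the reduced lower bound with the constant control $u_0$ and the exit time $\theta_m$ from $N$, and then Dynkin's formula with the nonnegative integrand $\mathcal{L}^{u_0}\varphi$, I obtain $V_k(t_m,x_m)\ge\varphi(t_m,x_m)+\gamma$; letting $m\to\infty$ contradicts ${V_k}_*(t_0,x_0)=\varphi(t_0,x_0)$. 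The subsolution property is dual: with $\varphi$ touching $V^*_k$ from above at a strict maximum and the contradiction hypothesis $\sup_{u\in\set{U}}\mathcal{L}^u\varphi(t_0,x_0)<0$, I would use continuity together with compactness of $\set{U}$ to secure $\mathcal{L}^u\varphi\le-\eta<0$ on $N$ \emph{uniformly} in $u$, bound every term of the supremum in the reduced \eqref{DPP-sup} by $\varphi(t_m,x_m)-\gamma$, and pass to the limit to reach a contradiction.

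I expect the principal obstacle to be this uniform-in-$u$ step of the subsolution argument: since the supremum over controls sits outside the expectation in \eqref{DPP-sup}, one must bound $\E{V^*_k(\theta^{\control{u}},\cdot)}$ by $\varphi(t_m,x_m)-\gamma$ simultaneously for every admissible $\control{u}$, which is exactly why the compactness of $\set{U}$ (Assumption~\ref{a:SDE}.\ref{a:SDE:compact}) and the continuity of $f,\sigma$ in the control are indispensable; the supersolution direction, resting on a single fixed $u_0$, is comparatively routine. Two further points require care: keeping each $\theta^{\control{u}}$ strictly below $\tau_k^k$ so that the one-step reduction is legitimate, and justifying Dynkin's formula, which is valid here because $\varphi$ is smooth and the process remains in the compact set $\ol N$ until the bounded exit time.
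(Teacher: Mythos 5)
Your proposal is correct and follows essentially the same route as the paper's (sketched) proof: verify Assumptions \ref{a:DPP} via Proposition \ref{prop:exit-time continuity} and classical SDE facts, localize the DPP of Theorem \ref{thm:DPP} to the exit time from a small ball whose closure lies in $[0,T_k[\times A_k$ so that only the $j=k$ term survives, and run the standard It\^o/test-function contradiction argument. If anything, you supply details the paper defers to its earlier reach-avoid work (notably the uniform-in-control bound for the subsolution via compactness of $\set{U}$), and you pair the DPP halves correctly---supersolution with \eqref{DPP-sub}, subsolution with \eqref{DPP-sup}---whereas the paper's sketch cites \eqref{DPP-sup} for the supersolution contradiction, apparently a typo.
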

	
	\begin{proof}
		See Appendix \ref{app-B}.  
	\end{proof}
	
\subsection{Boundary Conditions}
\label{subsec:boundary}
	To numerically solve the PDE of Theorem \ref{thm:DPE}, one needs boundary conditions on the complement of the set where the PDE is defined. This requirement is addressed in the following proposition.
	
	\begin{Prop}[Boundary Conditions]
	\label{prop:boundary}
		Suppose that the hypotheses of Theorem \ref{thm:DPE} hold. Then the value functions $V_k$ introduced in \eqref{Vk} satisfy the following boundary value conditions:
		
	\vspace{-4mm}
	\begin{small}
		\begin{subequations}
		\label{boundary}		
			\begin{align}
			\label{boundary pointwise} 
			\text{Dirichlet:}&
			\begin{cases}  
				{V_k}(t,x) = {V_{k+1}}(t,x){\ell_{k}(x)} \hspace{17mm} \vspace{1mm} \\ 
				\forall (t,x) \in [0,T_k] \times  A_k^c \bigcup \{T_k\} \times \R^d 
			\end{cases}\\
			\label{boundary visc} 
			\text{Viscosity:}&
			\begin{cases}
				\limsup\limits_{\footnotesize \begin{smallmatrix} A_k \ni x' \ra x \\ t' \ua t \end{smallmatrix}} {V_k}(t',x') \le  V^*_{k+1}(t,x)\ell^*_{k}(x) 
						\\
				\liminf\limits_{\footnotesize \begin{smallmatrix} A_k \ni x' \ra x \\ t' \ua t
				\end{smallmatrix}}{V_k}(t',x') \ge  {V_{k+1}}_*(t,x){\ell_k}(x) \\
				\forall (t,x) \in [0,T_k] \times  \partial A_k \bigcup \{T_k\} \times \ol{A}_k
			\end{cases}
			\end{align}
		\end{subequations}
	\end{small}
	\end{Prop}

	\begin{proof} See Appendix \ref{app-B}.\end{proof}

	Proposition \ref{prop:boundary} provides boundary condition for $V_k$ in both Dirichlet (pointwise) and viscosity senses. The Dirichlet boundary condition \eqref{boundary pointwise} is the one usually employed to numerically compute the solution via PDE solvers, whereas the viscosity boundary condition \eqref{boundary visc} is required for theoretical support of the numerical schemes and comparison results. 
	
	\begin{Rem}[Lower Semicontinuity]
		\label{rem:lsc}
		In the SDE setting, one can, without loss of generality, extend the class of admissible controls in the definition of $V_k$ to $\controlmaps{U}_0$, i.e., $V_k(t,x) = \sup_{\control{u} \in \controlmaps{U}_0} J_k(t,x;\control{u})$; for a rigorous technology to prove this assertion see \cite[Remark 5.2]{BouchardTouzi_WeakDPP}. Thus, $V_k$ is lower semicontinuous as it is a supremum over a fixed family of lower semicontinuous functions, see Lemma \ref{lem:lsc} in Appendix \ref{app-A}. In this light, one may argue that in the viscosity boundary condition \eqref{boundary visc}, the second assertion is subsumed by the Dirichlet boundary condition \eqref{boundary pointwise}.
	\end{Rem}

\subsection{Discussion on Numerical Issues}
\label{subsec:numerical}
	For the class of controlled diffusion processes \eqref{SDE}, Subsection \ref{subsec:PDE} developed a PDE characterization of the value function $V_k$ within the set $[0, T_k[ \times A_k$ along with boundary conditions in terms of the successor value function $V_{k+1}$ provided in Subsection \ref{subsec:boundary}. Since $V_{n+1} \equiv 1$, one can infer that Theorem \ref{thm:DPE} and Proposition \ref{prop:boundary} provide a series of PDE where the last one has known boundary condition, while the boundary conditions of earlier in the sequence are determined by the solution of subsequent PDE, i.e., $V_{k+1}$ provides boundary conditions for the PDE corresponding to the value function $V_k$. Let us highlight once again that the basic motion planning maneuver involving only two sets is effectively the same as the first step of this series of PDEs and was studied in our earlier work \cite{ref:MohChatLyg-15}. 
	
	Before proceeding with numerical solutions, we need to properly address two technical concerns:

	\begin{enumerate}[label=(\roman*), itemsep = 2mm] 
		\item On the one hand, for the definition \eqref{path-set} we need to assume that the goal set $G_i$ is closed so as to allow continuous transition into $G_i$; see Assumption \ref{a:set}.\ref{a:set:closed} and the following discussion. On the other hand, in order to invoke the DPP argument of Section \ref{sec:DPP} and its consequent PDE in Subsection \ref{subsec:PDE}, we need to impose that the payoff functions $(\ell_i)_{i=1}^n$ are all lower semicontinuous; see Assumption \ref{a:DPP}.\ref{a:payoff} In the case of the value function $V$ in \eqref{V}, this constraint results in $(G_i)_{i=1}^n$ all being open, which in general contradicts Assumption \ref{a:set}.\ref{a:set:closed}.

		\item Most of the existing PDE solvers provide theoretical guarantees for \emph{continuous} viscosity solutions, e.g., \cite{ref:Mitchell-toolbox}. Theorem \ref{thm:DPE}, on the other hand, characterizes the solution to the motion planning problem in terms of \emph{discontinuous} viscosity solutions. Therefore, it is a natural question whether we could employ any of available numerical methods to approximate the solution of our desired value function. 
	\end{enumerate}

	Let us initially highlight the following points: Concerning (i) it should be mentioned that this contradiction is not applicable for the motion planning initial set \eqref{reach-set} since the goal set $G_i$ can be simply chosen to be open without confining the continuous transitions. Concerning (ii), we would like to stress that this discontinuous formulation is inevitable since the value functions defined in \eqref{V's} are in general discontinuous, and any PDE approach has to rely on discontinuous versions.

	To address the above concerns, we propose an $\eps$-conservative but precise way of characterizing the motion planning initial set. Given $(W_i,G_i) \in \borel(\R^d) \times \borel(\R^d)$, let us construct a smaller goal set $G^\eps_i \subset G_i$ such that $G^\eps_i \Let \{ x \in G_i | \dist(x, G_i^c) \ge \eps\}$.\footnote{$\dist(x,A) \Let \inf_{y \in A}\|x - y\|$, where $\|\cdot\|$ stands for the Euclidean norm.} For sufficiently small $\eps>0$ one may observe that $W_i \setminus G^\eps_i$ satisfies Assumption \ref{a:SDE:set}. Note that this is always possible if $W_i \setminus G_i$ satisfies Assumption \ref{a:SDE:set} since one can simply take $\eps < h/2$, where $h$ is as defined in Assumption \ref{a:SDE:set}. Figure \ref{fig:eps} depicts this situation.
	\begin{figure}[t!]
		\centering
		\includegraphics[scale = 0.5]{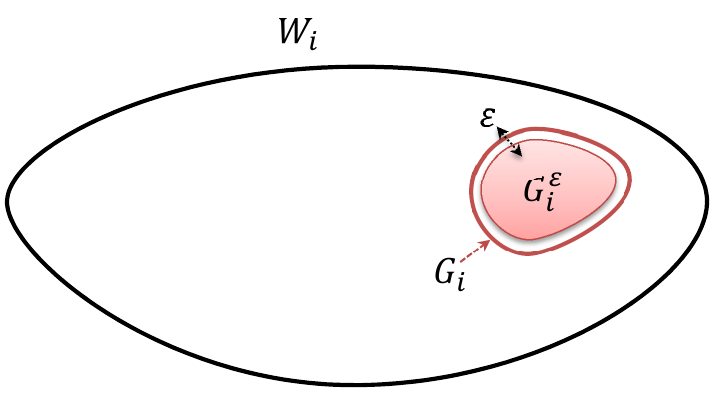}
		\caption{Construction of the sets $G^\eps_i$ from $G_i$ as described in Subsection \ref{subsec:numerical}}
		\label{fig:eps}
	\end{figure}

	Formally we define the payoff function $\ell^\eps_i : \R^d \ra \R$ as follows: 
	\begin{align*}
		\ell^\eps_i(x) \Let\biggl( 1 - \frac{\dist(x,G_i^\eps)}{\eps}\biggr) \mx 0 .
	\end{align*}
	Replacing the goal sets $G^\eps_i$ and payoff functions $\ell^\eps_i$ in \eqref{V}, we arrive at the value function
	\begin{align*}
		V^\eps(t,x) & \Let \sup_{\control{u} \in \controlmaps{U}_t} \E{\prod_{i = 1}^{n} \ell^\eps_i \big(\traj{X}{t,x}{\control{u}}{\eta^\eps_i}\big)}, \quad 
		\begin{cases}
			 \eta^\eps_i & \Let \Theta^{B^\eps_{1:n}}_i \mn T, \\ B^\eps_i & \Let W_i \setminus G^\eps_i.
		\end{cases}
	\end{align*}	
	It is straightforward to inspect that $V^\eps \le V$ since $G_i^\eps \subset G_i$. Moreover, with a similar technique as in \cite[Thm.\ 5.1]{ref:MohChatLyg-15}, one may show that $V(t,x) = \lim_{\eps \da 0} V^\eps(t,x)$ on the set $(t,x) \in [t,T[ \times \R^d$, which indicates that the approximation scheme can be arbitrarily precise. Note that the approximated payoff functions $\ell^\eps_i$ are, by construction, Lipschitz continuous that in light of uniform continuity of the process, Lemma \ref{lem:exit-time unif cont} in Appendix \ref{app-B}, leads to the continuity of the value function $V^\eps$.\footnote{This continuity result can, alternatively, be deduced via the comparison result of the viscosity characterization of Theorem \ref{thm:DPE} together with boundary conditions \eqref{boundary visc} \cite{Crandali_Ishii_Lions_VoscositySolutions}.} Hence, the discontinuous PDE characterization of Subsection \ref{subsec:PDE} can be approximated arbitrarily closely in the continuous regime.
	
	Let us recall that having reduced the motion planning problems to PDEs, numerical methods and computational algorithms exist to approximate its solution \cite{ref:Mitchell-toolbox}. In Section \ref{sec:simulation} we demonstrate how to use such methods to address practically relevant problems. In practice, such methods are effective for systems of relatively small dimension due to the curse of dimensionality. To alleviate this difficulty and extend the method to large problems, we can leverage on ADP \cite{ref:FarVan-03,ref:CogRotVanLall-06} or other advances in numerical mathematics, such as tensor trains \cite{ref:KhoSch-11}. The link between motion planning and the PDEs through DPP is precisely what allows us to capitalize on any such developments in the numerics.
	
\section{Numerical Example: Chemical Langevin Equation for a Biological Switch} 
\label{sec:simulation}

	When modeling uncertainty in biochemical reactions, one often resorts to countable Markov chain models \cite{ref:Wilkinson-06} which describe the evolution of molecular numbers. Due to the Markov property of chemical reactions, one can track the time evolution of the probability distribution for molecular populations as a family of ordinary differential equations called the \emph{chemical master equation} (CME) \cite{ref:Andrews-09,ref:Khammash&Gillespie-05}, also known as the forward Kolmogorov equation. 
	
	Though close to the physical reality, the CME is particularly difficult to work with analytically. One therefore typically employs different approximate solution methods, for example the Finite State Projection method \cite{ref:KhammashBook-10} or the moment closure method \cite{ref:Hespanha}. Such approximation method resorts to approximating discrete molecule numbers by a continuum and capturing the stochasticity in their evolution through a stochastic differential equation. This stochastic continuous-time approximation is called the \emph{chemical Langevin equation} or the \emph{diffusion approximation}, see for example \cite{ref:KhammashBook-10} and the references therein. The Langevin approximation can be inaccurate for chemical species with low copy numbers; it may even assign a negative number to some molecular species. To circumvent this issue we assume here that the species of interest come in sufficiently high copy numbers to make the Langevin approximation reasonable.
	
	 Multistable biological systems are often encountered in nature \cite{ref:Becskei01}. In this section we consider the following chemical Langevin formulation of a bistable two gene network: 
	\begin{align}
	\label{sde switch}
		\begin{cases}
		\diff X_t = \big ( f(Y_t, \control u_x) - \mu_x X_t \big)\diff t  + \sqrt{f(Y_t, \control{u}_x)}\diff W^1_t  + \sqrt{\mu_x X_t} \diff W^2_t, \\ 
			\diff Y_t = \big ( g(X_t, \control{u}_y) - \mu_y Y_t \big)\diff t + \sqrt{g(X_t, \control{u}_y)}\diff W^3_t +  \sqrt{ \mu_y Y_t} \diff W^4_t, 
		\end{cases}
	\end{align}
	where $X_t$ and $Y_t$ are the concentration of the two repressor proteins with the respective degradation rates $\mu_x$ and $\mu_y$; $(W^i_t)_{t\ge 0}$ are independent standard Brownian motion processes. Functions $f$ and $g$ are repression functions that describe the impact of each protein on the other's rate of synthesis controlled via some external inputs $\control{u}_x$ and $\control{u}_y$. 

	In the absence of exogenous control signals, the authors of \cite{ref:Cherry-00} study sufficient conditions on the drifts $f$ and $g$ under which the system dynamic \eqref{sde switch} without the diffusion term has two (or more) stable equilibria. In this case, system \eqref{sde switch} can be viewed as a biological switch network. The theoretical results of \cite{ref:Cherry-00} are also experimentally investigated in \cite{ref:Gardner-00} for a genetic toggle switch in \emph{Escherichia coli}. 

	Here we consider the biological switch dynamics where the production rates of proteins are influenced by external control signals; experimental constructs that can be used to provide such inputs have recently been reported in the literature \cite{ref:Milias-11}. The level of repression is described by a \emph{Hill} function, which models cooperativity of binding as follows:
	\begin{align*}
		f(y,u) \Let \frac{\theta^{n_1}_1k_1}{y^{n_1}+\theta^{n_1}_1} u, \qquad g(x,u) \Let \frac{\theta^{n_2}_2k_2}{x^{n_2}+\theta^{n_2}_2} u,
	\end{align*}
	where $\theta_i$ are the threshold of the production rate with respective exponents $n_i$, and $k_i$ are the production scaling factors. The parameter $u$ represents the role of external signals that affect the production rates, for which the control sets are $\set{U}_x \Let [\ul{u}_x, \ol{u}_x]$ and $\set{U}_y \Let [\ul{u}_y, \ol{u}_y]$. 
	In this example we consider system \eqref{sde switch} with the following parameters: $\theta_i = 40$, $\mu_i = 0.04$, $k_i = 4$ for both $i \in \{1,2\}$, and exponents $n_1 = 4$, $n_2 = 6$.	Figure \ref{fig:nullcline} depicts the drift nullclines and the equilibria of the system. The equilibria $z_a$ and $z_c$ are stable, while $z_b$ is the unstable one. We should remark that the ``stable equilibrium'' of SDE \eqref{sde switch} is understood in the absence of the diffusion term as the noise may very well push the states from one stable equilibrium to another.
	
	\begin{figure}[t!]
	\centering
		\subfigure[Nullclines and equilibria of the drift of the SDE \eqref{sde switch}; $x_a$ and $x_b$ are stable, and $x_c$ is unstable.]{\label{fig:nullcline}\includegraphics[scale = 0.2]{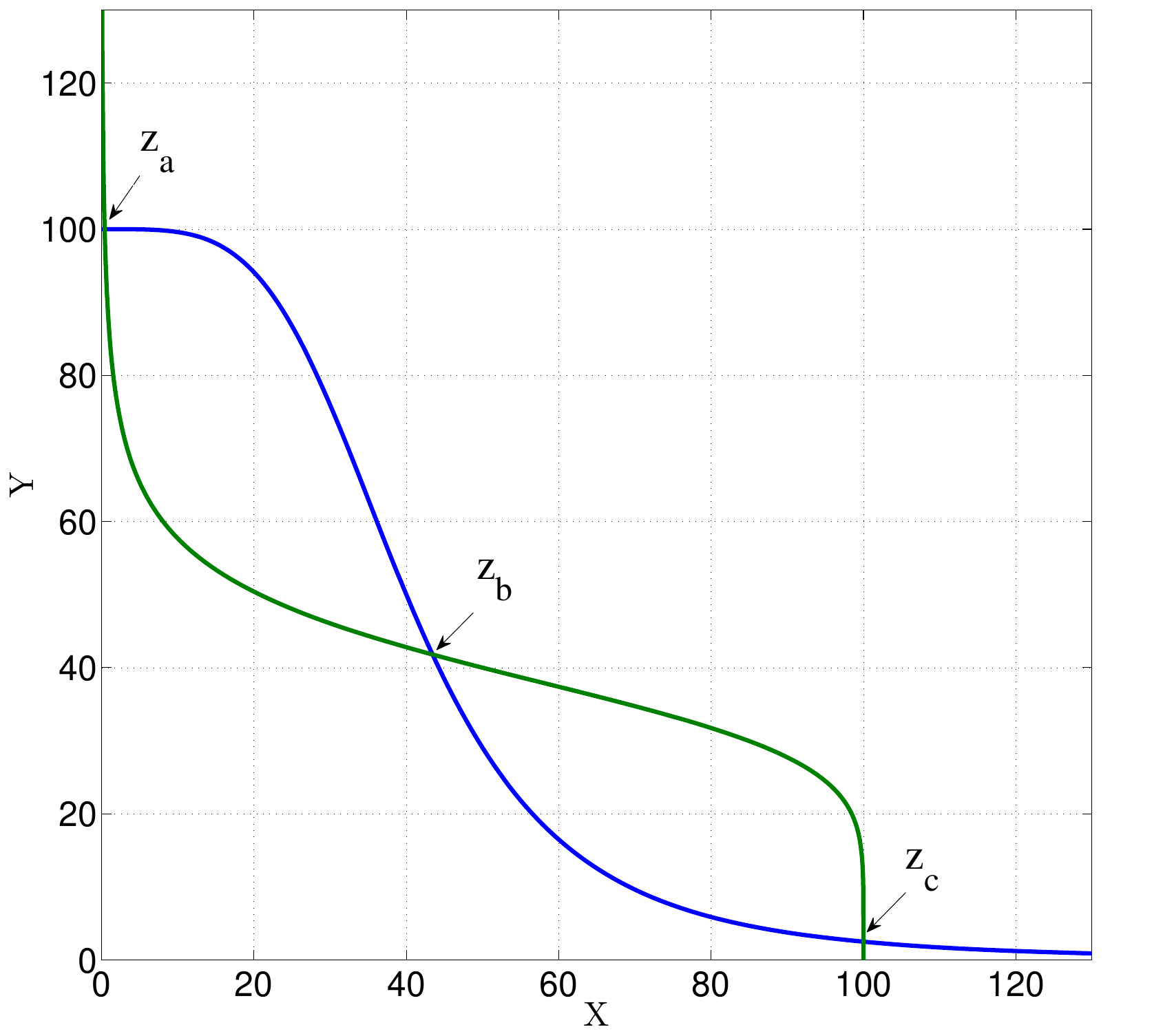}} 
		\goodgap \goodgap \goodgap \goodgap \goodgap \goodgap \goodgap \goodgap
		\subfigure[The set $A$ is an avoidance region contained in the region of attraction of the stable equilibria $x_a$ and $x_c$, $B$ is the target set around the unstable equilibrium $x_b$, and $C$ is the maintenance margin.]{\label{fig:sets}\includegraphics[scale = 0.213]{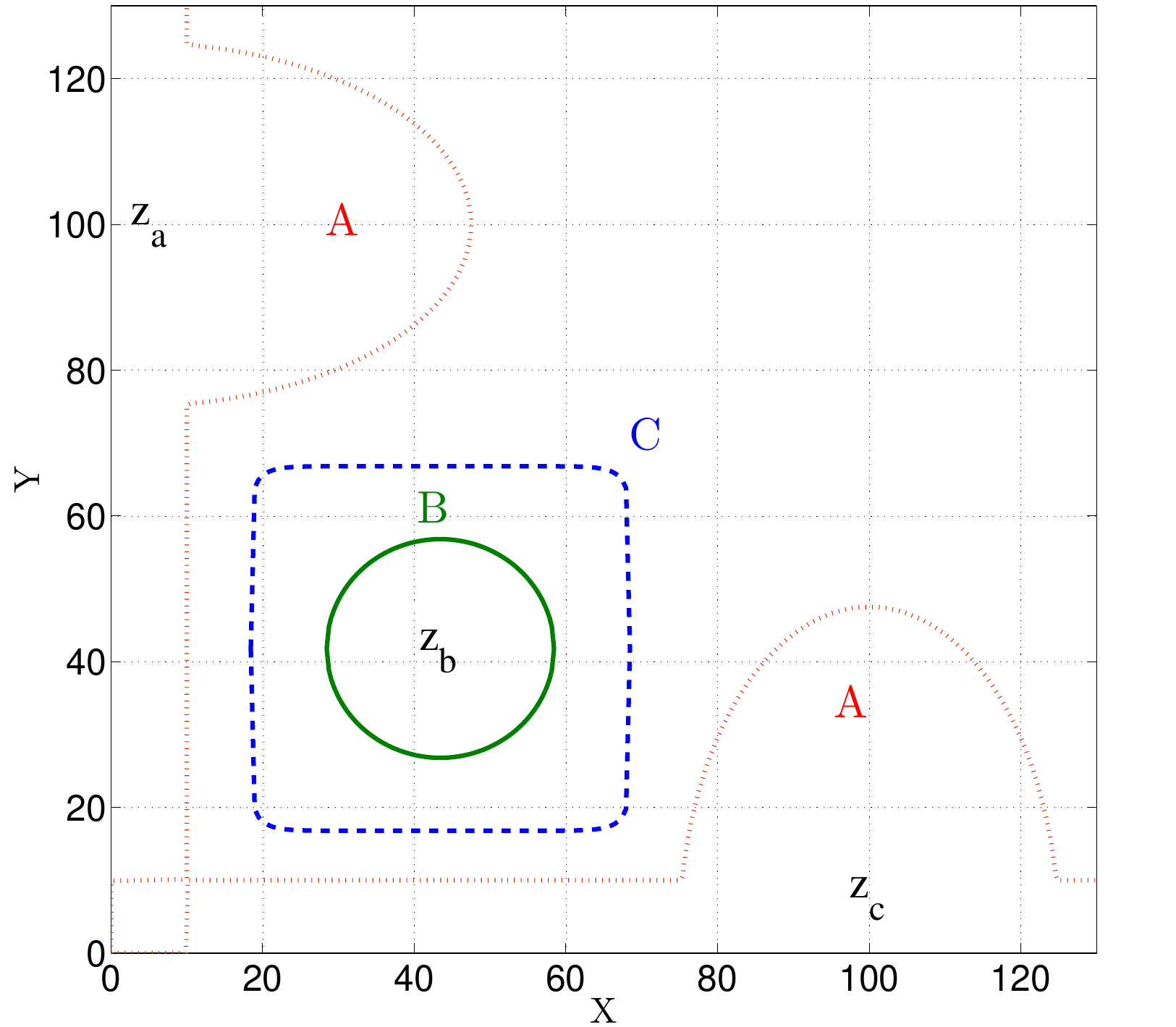}}
		\caption{State space of the biological switch \eqref{sde switch} with desired motion planning sets.}
		\label{fig:null_set}
	\end{figure}
	
	Driving bistable systems away from the commonly observed states may provide useful insights into their functional organization. Motivated by this, in this example we consider a motion planning task comprising two phases. First, we aim to steer the number of proteins to be at a target set around the unstable equilibrium at a certain time horizon, say $T_1$, by synthesizing appropriate input signals $\control{u}_x$ and $\control{u}_y$. During this phase we opt to avoid the region of attraction of the stable equilibria as well as low numbers for each protein; the latter justifies our Langevin model being well-posed in the region of interest. These target and avoid sets are denoted, respectively, by the closed sets $B$ and $A$ in Figure \ref{fig:sets}. In the second phase of the task, it is required to keep the molecular populations within a larger margin around the unstable equilibrium till sometime after $T_1$, say $T_2 > T_1$; Figure \ref{fig:sets} depicts this maintenance margin by the open set $C$. In the context of reachability, the second phase is known as \emph{viability} \cite{ref:AubinPrato-1998}. 

	In view of motion planning events introduced in Definition~\ref{def:event}, in particular \eqref{reach-event}, the above motion planning task can be expressed by $(A^c \Reach{T_1} B) \com (C \Reach{T_2} C)$. Let \eqsmall{\traj{Z}{t,z}{\control{u}}{\cdot} \Let \big[\traj{X}{t,x}{\control{u}}{\cdot},\traj{Y}{t,y}{\control{u}}{\cdot}\big]} be the joint process with initial condition $z \Let [x,y]$ and controller $\control{u} \Let [\control{u}_x, \control{u}_y]$. Following the framework of Section \ref{sec:DPP}, we consider the following value functions:
	\begin{subequations}
	\label{Vs sim}
	\begin{align}
		\label{V1 sim}
		&V_1(t,z) \Let \sup_{\control{u} \in \controlmaps{U}_t}\EE\Big[ \ind{B}\big( \traj{Z}{t,z}{\control{u}}{\tau^1_1}\big) \ind{C}\big( \traj{Z}{t,z}{\control{u}}{\tau^1_2}\big) \Big], \\ 
		\label{V2 sim}
		&V_2(t,z) \Let \sup_{\control{u} \in \controlmaps{U}_t}\EE\Big[ \ind{C}\big( \traj{Z}{t,z}{\control{u}}{\tau^2_2}\big) \Big], 
	\end{align}
	\end{subequations}	
	where the stopping times {$\tau^1_1, \tau^1_2,\tau^2_2$} are defined as in \eqref{Vk} with sets {\small $A_1 \Let A^c$} and {\small $A_2 \Let C$}. Let us recall that by Theorem~\ref{thm:SOC} the desired set of initial conditions in Definition~\ref{def:initial set} is indeed the $p$ superlevel set of $V_1$. The solution of the motion planning objective is the value function $V_1$ in \eqref{V1 sim}, which in view of Theorem \ref{thm:DPE} is characterized by the Dynkin PDE operator in the interior of $[0,T_1[ \times A^c$. However, due to the boundary condition \eqref{boundary pointwise}, we first need to compute $V_2$ in \eqref{V2 sim} to provide boundary conditions for $V_1$ according to
	\begin{align}
	\label{V1 sim boundary}
		\begin{cases}
			V_1(t,z) = V_2(t,z)\ind{B}(z),  \\
			\forall (t,z) \in [0,T_1]\times A \bigcup \{T_1\} \times \R^2.
		\end{cases}
	\end{align}
	Observe that the boundary condition for the value function $V_2$ is
	\begin{align*}
		V_2(t,z) = \ind{C}(z), \qquad \forall (t,z) \in [0, T_2] \times C^c \bigcup \{T_2\} \times \R^2.
	\end{align*}
	Therefore, we need to solve the PDE of $V_2$ backward from the terminal time $T_2$ to $T_1$ together with the above boundary condition. Then, the value function $V_1$ can be computed via solving the same PDE from $T_1$ to $0$ with the boundary condition \eqref{V1 sim boundary}. The Dynkin operator $\mathcal{L}^u$ reduces to	
	\begin{align*}
		\sup_{u \in \set{U}} \mathcal{L}^u \phi(t &,x,y) = \max_{u \in \set{U}} \Big[ \partial_t \phi  + \partial_x \phi\big( f(y,u_x) - \mu_x x \big) + \partial_y \phi \big( g(x,u_y) - \mu_y y \big) \\
		& \qquad \qquad \qquad + \frac{1}{2}\partial_x^2 \phi \big(f(y,u_x) + \mu_x x \big) + \frac{1}{2}\partial_y^2 \phi \big(g(x,u_y) + \mu_y y \big) \Big]\\
		& = \partial_t \phi - \big(\partial_x \phi-\frac{1}{2}\partial_x^2 \phi \big)\mu_x x - \big( \partial_y \phi - \frac{1}{2}\partial_y^2 \phi \big)\mu_y y \\
		& \qquad \qquad 
		+ \max_{u_x \in [\ul{u}_x, \ol{u}_x]} \big[ f(y,u_x) \big(\partial_x \phi + \frac{1}{2}\partial_x^2\phi \big) \big] + \max_{u_y \in [\ul{u}_y, \ol{u}_y]} \big[ g(x,u_y) \big(\partial_y \phi + \frac{1}{2}\partial_y^2\phi \big) \big].
	\end{align*}
	Thanks to the linearity of the drift term in $\control u$, an optimal control can be expressed in terms of derivatives of the value functions $V_1$ and $V_2$ as
	\begin{align*}
		\control{u}_x^*(t,x,y) = 
		\begin{cases}
			\ol{u}_{x}(t,x,y) \quad \text{if} ~ \partial_x V_i(t,x,y) + \frac{1}{2}\partial_x^2 V_i(t,x,y)\ge 0, \\
			\ul{u}_{x}(t,x,y) \quad \text{if} ~ \partial_x V_i(t,x,y) + \frac{1}{2}\partial_x^2 V_i(t,x,y) < 0,
		\end{cases} \\
		\control{u}_y^*(t,x,y) = 
		\begin{cases}
			\ol{u}_{y}(t,x,y) \quad \text{if} ~ \partial_y V_i(t,x,y) + \frac{1}{2}\partial_y^2 V_i(t,x,y)\ge 0, \\
			\ul{u}_{y}(t,x,y) \quad \text{if} ~ \partial_y V_i(t,x,y) + \frac{1}{2}\partial_y^2 V_i(t,x,y) < 0,
		\end{cases}
	\end{align*}
	where $i \in \{1,2\}$ corresponds to the phase of the motion. 	
	
	\begin{figure}[t!]
	\centering
		\subfigure[$V_2$ in case of full controllability over both production rates.]{\label{fig:stay}\includegraphics[scale = 0.16]{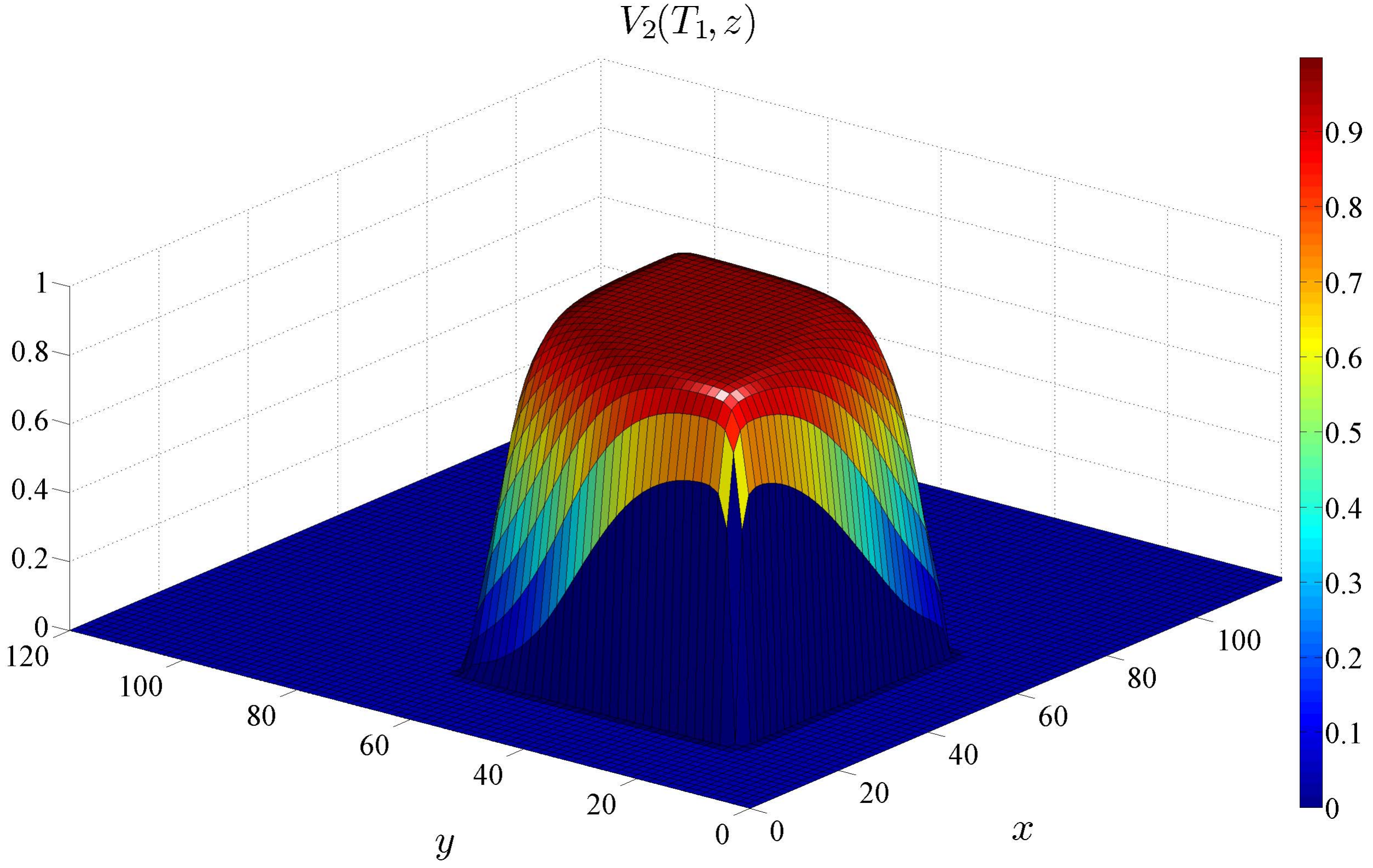}}
		\quad
		\subfigure[$V_2$ in case only the production rate of protein x is controllable.]{\label{fig:stay_y}\includegraphics[scale = 0.16]{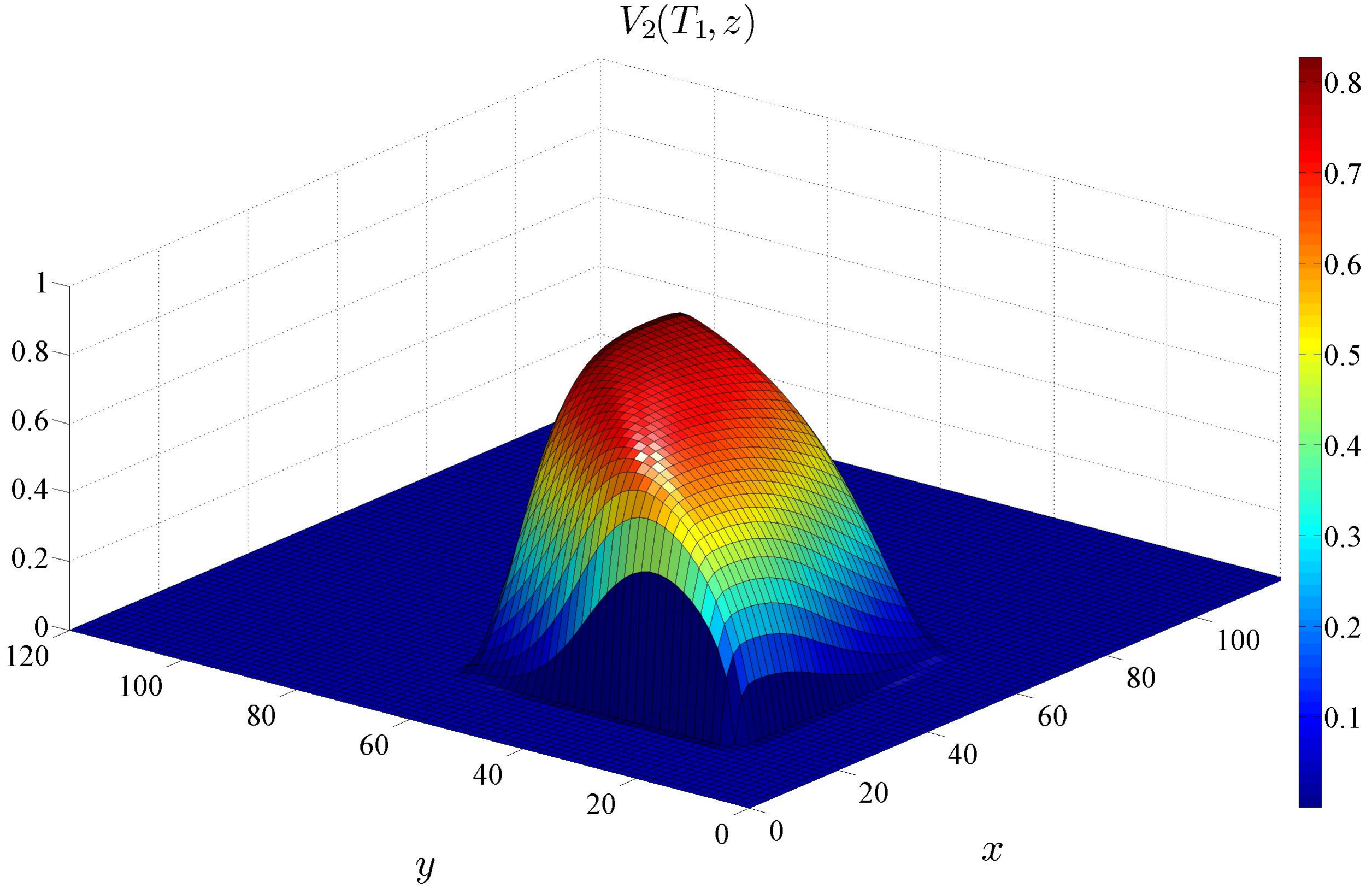}}
		\caption{The value function $V_2$ as defined in \eqref{V2 sim} corresponding to the probability of staying in $C$ for $60$ time units.}
		\label{fig:V2}
	\end{figure}
	
	\begin{figure}[t!]
		\centering
			\subfigure[$V_1$ in case of full controllability over the production rates.]{\label{fig:reach}\includegraphics[scale = 0.16]{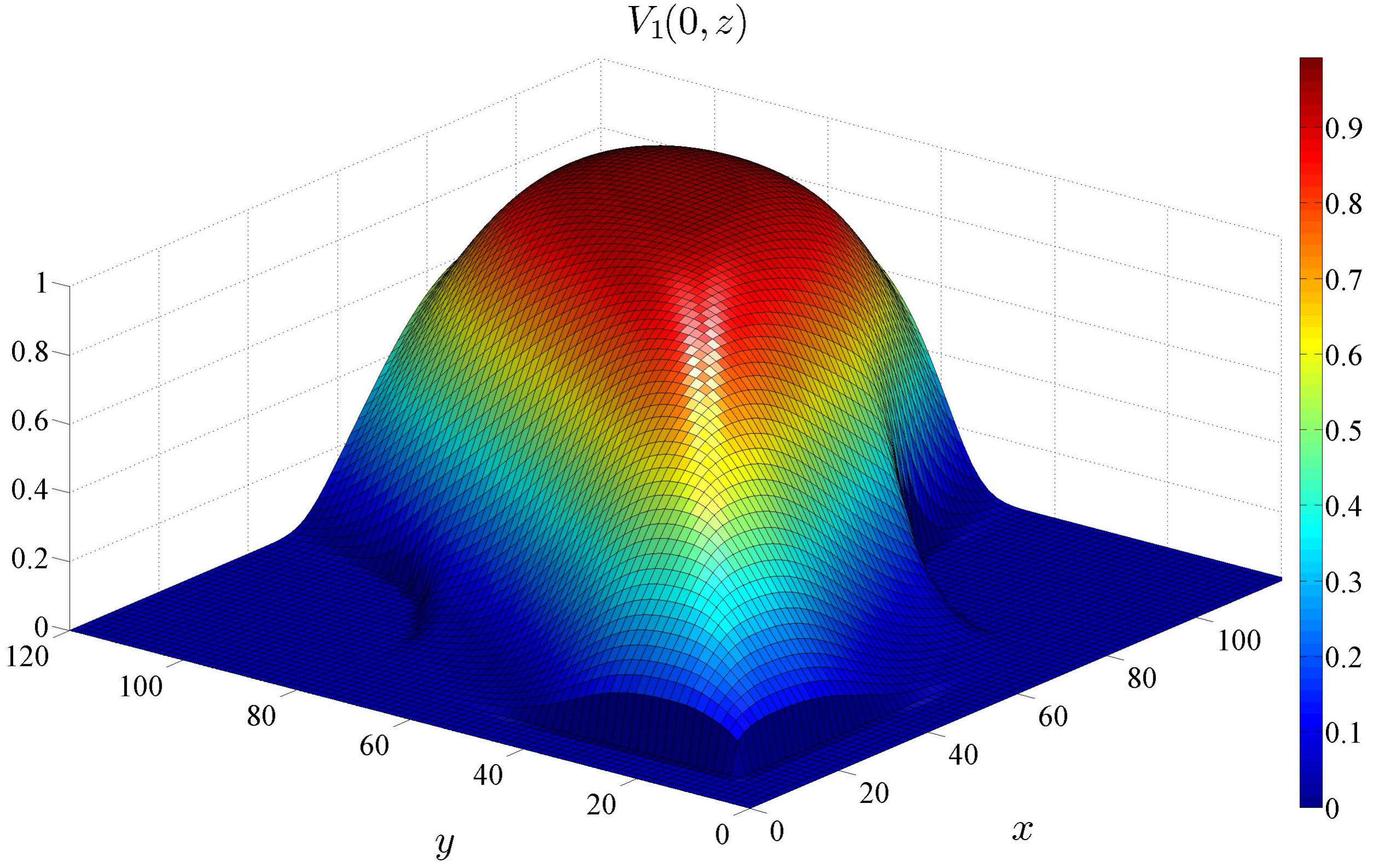}}
		\quad
			\subfigure[$V_1$ in case only the production rate of protein x is controllable.]{\label{fig:reach_y}\includegraphics[scale = 0.16]{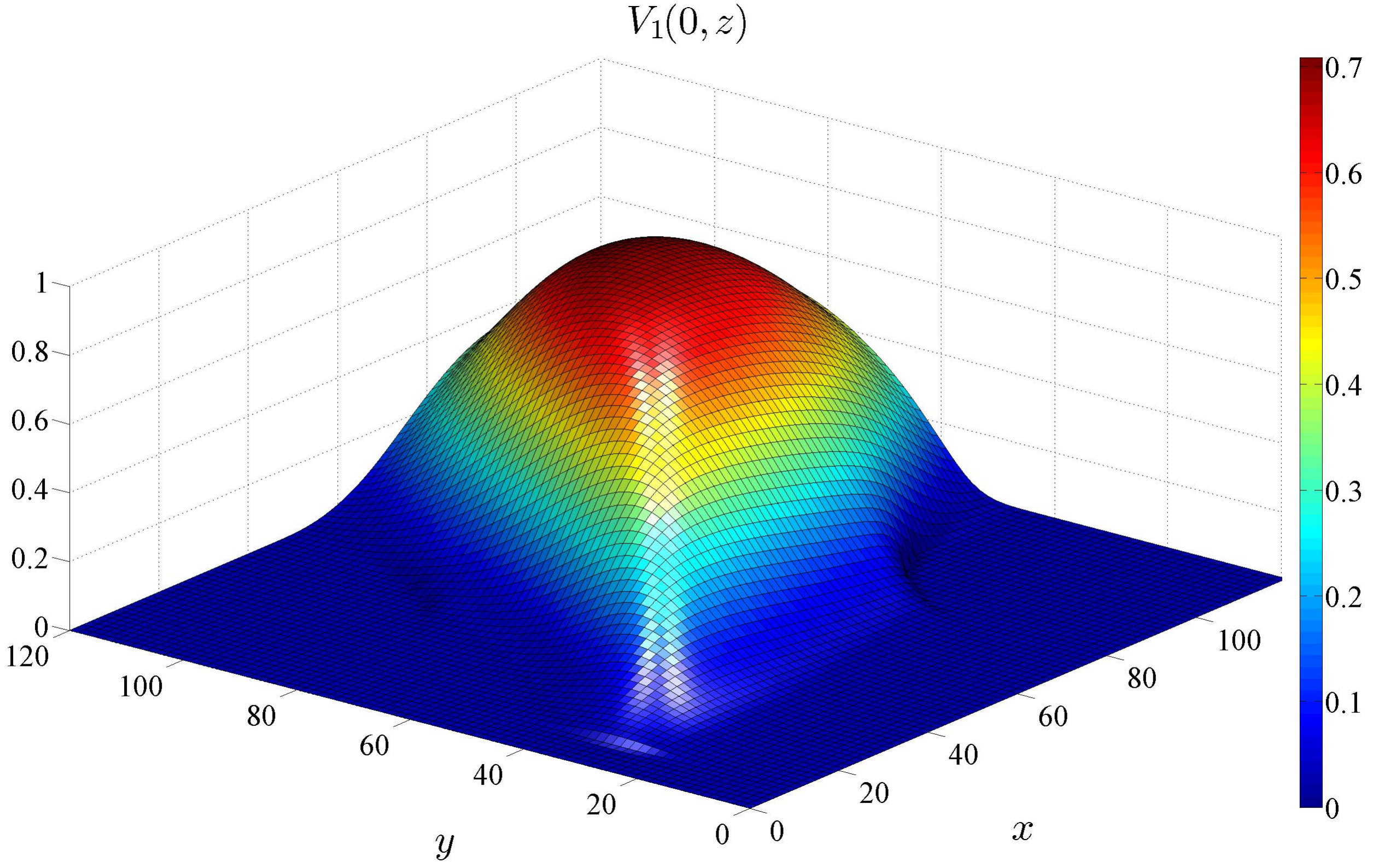}}
			\caption{The value function $V_1$ as defined in \eqref{V1 sim} corresponding to the probability of being at $B$ at $20$ time units and staying in $C$ afterward till $80$ time units, while avoiding $A$ throughout the motion.}
			\label{fig:V1}
	\end{figure}
			
	For this system we investigate two scenarios: One where full control over both production rates is possible, and one where only the production rate of protein $x$ can be controlled. Accordingly, in the first scenario we set $\ul{u}_x =  \ul{u}_y = 0$ and $\ol{u}_x = \ol{u}_y = 2$ while in the second we set $\ul{u_x}=0$, $\ol{u_x}=2$ and $\ol{u}_y = \ul{u}_y = 1$. Figure \ref{fig:V2} depicts the probability distribution of staying in set $C$ within the time horizon $[T_1, T_2] = [20, 80]$ for $60$ time units \footnote{Notice that the half-life of each protein is assumed to be 17.32 time units} in terms of the initial conditions $(x,y) \in \R^2$. Value function $V_2$ is zero outside set $C$, as the process has obviously left $C$ if it starts outside it. Figures \ref{fig:stay} and \ref{fig:stay_y} demonstrate the first and second scenarios, respectively. Note that in the second case the probability of success dramatically decreases in comparison to the first. This result indicates the importance of full controllability of the production rates for the achievement of the desired control objective.

	Figure \ref{fig:V1} depicts the probability of successively being at set $B$ at $T_1 = 60$ time units while avoiding $A$, and staying in set $C$ till $T_2 = 80$ time units thereafter. Since the objective is to avoid $A$ throughout the motion, the value function $V_1$ takes zero value on $A$. Figures \ref{fig:reach} and \ref{fig:reach_y} demonstrate the first and second control scenarios, respectively. It is easy to observe the non-smooth behavior of the value function $V_1$ on the boundary of set $A$ in Figure \ref{fig:reach_y}. 
	All simulations in this subsection were obtained using the Level Set Method Toolbox \cite{ref:Mitchell-toolbox} (version 1.1), with a grid $121 \times 121$ in the region of interest.

\section{Conclusion and Future Directions} \label{sec:conculusion}

	We introduced different notions of stochastic motion planning problems. Based on a class of stochastic optimal control problems, we characterized the set of initial conditions from which there exists an admissible control to execute the desired maneuver with probability no less than some pre-specified value. We then established a weak DPP in terms of auxiliary value functions. Subsequently, we focused on a case of diffusions as the solution of a controlled SDE, and investigated the required conditions to apply the proposed DPP. It turned out that invoking the DPP one can solve a series of PDEs in a recursive fashion to numerically approximate the desired initial set as well as the admissible control for the motion planning specifications. Finally, the performance of the proposed stochastic motion planning notions was illustrated for a biological switch network.

	For future work, as Theorem \ref{thm:DPP} holds for the broad class of stochastic processes whose sample paths are right continuous with left limits, we aim to study the required conditions of the proposed DPP (Assumptions \ref{a:DPP}) for a larger class of stochastic processes, e.g., controlled Markov jump-diffusions. Furthermore, motivated by the fact that full state measurements may not be available in practice, an interesting question is to address the motion planning objective with imperfect information, i.e., an admissible control would be only allowed to utilize the information of the process $Y_s \Let h(X_s)$ where $h:\R^d \ra \R^{d_y}$ is a given measurable mapping.

%
 \renewcommand{\thesection}{I}
 \section{Appendix: Technical Proofs of Sections \ref{sec:connection} \& \ref{sec:DPP}} \label{app-A}
 \setcounter{equation}{0}
 \numberwithin{equation}{section}

	We first start with a rather technical lemma showing that the sequential stopping times in Definition \ref{def:theta} are indeed well defined. 
	
	\begin{Lem}[Measurability]
	\label{fact:measurability}
		Consider a sequence of $(A_i)_{i=1}^n \subset \borel(\R^d)$ and $(t,x) \in \set{S}$. The sequential exit-time $\Theta^{A_{1:n}}_i(t,x)$ is an $\filtration_t$-stopping time for all $i \in \{1,\cdots,n\}$, i.e., $\big \{\Theta^{A_{1:n}}_i(t,x) \le s \big \} \in \sigalg_{t,s}$ for all $s \ge 0$. 
	\end{Lem}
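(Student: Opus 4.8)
The plan is to prove the claim by induction on $i$, exploiting the fact that the sequential exit-times are defined recursively. The key observation is that each $\Theta^{A_{1:n}}_i$ is the first exit-time of an open or measurable set by a c\`adl\`ag process, started not at the deterministic time $t$ but at the previously-established stopping time $\Theta^{A_{1:n}}_{i-1}$. So the natural approach is: (a) handle the base case $\Theta^{A_{1:n}}_0 = t$, which is trivially an $\filtration_t$-stopping time since $\{t \le s\} = \Omega \in \sigalg_{t,s}$ for $s \ge t$ and equals $\emptyset$ otherwise; then (b) assume $\Theta^{A_{1:n}}_{i-1}$ is an $\filtration_t$-stopping time and deduce the same for $\Theta^{A_{1:n}}_i$.

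\emph{Key steps.}
First I would reduce the problem to a statement about a single first-exit time from the random initial time $\tau \Let \Theta^{A_{1:n}}_{i-1}$. The central difficulty is to express the event $\{\Theta^{A_{1:n}}_i \le s\}$ in terms of information in $\sigalg_{t,s}$. Because the process $\traj{X}{t,x}{\control{u}}{\cdot}$ is $\filtration_t$-adapted with right-continuous left-limit sample paths, and the sets $A_i$ are merely Borel, I cannot in general write the exit-time as a countable union over rationals of events involving $\{\traj{X}{t,x}{\control{u}}{r} \notin A_i\}$ and hope for measurability without a completeness argument. This is exactly why the \emph{usual conditions} (right-continuity and $\PP$-completeness of the filtration $\filtration_t$), emphasized in Section \ref{sec:problem}, are needed: they guarantee, via the d\'ebut theorem (see \cite[p.\ 48]{ref:KarShr-91}), that the first entry/exit time of a progressively measurable process into a Borel set is a stopping time.

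\emph{Assembling the argument.}
The plan is therefore: show $\traj{X}{t,x}{\control{u}}{\cdot}$ is $\filtration_t$-progressively measurable (this follows from $\filtration_t$-adaptedness together with right-continuity of sample paths by the standard result that a right-continuous adapted process is progressively measurable); observe that the sequential exit-time $\Theta^{A_{1:n}}_i = \inf\{r \ge \tau : \traj{X}{t,x}{\control{u}}{r} \in A_i^c\}$ is the d\'ebut of the progressive set $\{(r,\omega) : r \ge \tau(\omega),\ \traj{X}{t,x}{\control{u}}{r}(\omega) \in A_i^c\}$, which is progressive because $\tau$ is an $\filtration_t$-stopping time by the inductive hypothesis; and finally invoke the d\'ebut theorem to conclude that $\Theta^{A_{1:n}}_i$ is an $\filtration_t$-stopping time, i.e.\ $\{\Theta^{A_{1:n}}_i \le s\} \in \sigalg_{t,s}$ for all $s \ge 0$. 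The induction then closes.

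\emph{Main obstacle.}
The step I expect to be the crux is the measurability of the \emph{random-initial-time} exit, namely verifying that restricting the time index to $r \ge \tau$ preserves progressive measurability of the relevant set. The cleanest route is to note $\{\Theta^{A_{1:n}}_i \le s\} = \{\tau \le s\} \cap \{\text{process exits } A_i \text{ on } [\tau, s]\}$ and to treat the second factor by the strong Markov / progressive-measurability machinery on the stochastic interval $[\![\tau, \infty[\![$. Since the Borel (rather than open) nature of $A_i$ rules out the elementary rational-approximation argument, the d\'ebut theorem under the usual conditions is essential, and I would present the proof as a careful bookkeeping of which $\sigma$-algebra each event lives in, emphasizing that the $\PP$-completeness of $\filtration_t$ is what rescues measurability.
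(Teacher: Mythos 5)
Your proof is correct, and it reaches the conclusion by a genuinely different route than the paper. Both arguments are inductions on $i$ with the same trivial base case $\Theta^{A_{1:n}}_0 = t$; the difference lies entirely in the inductive step. You realize $\Theta^{A_{1:n}}_i$ as the d\'ebut of the progressive set $\{(r,\omega) : r \ge \Theta^{A_{1:n}}_{i-1}(\omega)\} \cap \{(r,\omega) : \traj{X}{t,x}{\control{u}}{r}(\omega) \in A_i^c\}$ (progressive because a c\`adl\`ag adapted process is progressively measurable and a stochastic interval above a stopping time is progressive), and then invoke the d\'ebut theorem, which is where completeness and right-continuity of $\filtration_t$ enter. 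The paper instead introduces the single-set exit time $\tau_{A_i}(t,x) = \inf\{s \ge 0 : \traj{X}{t,x}{\control{u}}{t+s} \notin A_i\}$, cites \cite[Thm.\ 1.6, Chap.\ 2]{ref:EthKur-86} for it being an $\filtration_t$-stopping time, and closes the induction via the shift-operator identity $\Theta^{A_{1:n}}_{i+1} = \Theta^{A_{1:n}}_i + \tau_{A_i} \circ \vartheta_{\Theta^{A_{1:n}}_i}$, where $\vartheta_s\omega(\cdot) = \omega(s+\cdot)$, appealing to measurability of the shift and right-continuity of $\filtration_t$ \cite[Prop.\ 1.4, Chap.\ 2]{ref:EthKur-86}. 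The trade-off: the paper's shift decomposition is lighter and mirrors the renewal/Markov structure exploited later in Assumption \ref{a:DPP}.\ref{a:process}, but exit-time results of the Ethier--Kurtz type are naturally stated for open or closed sets and right-continuous paths; your d\'ebut-theorem argument leans on heavier machinery (and on the completion built into $\sigalg_{t,s}$) yet covers arbitrary Borel $A_i$ exactly as the lemma is stated --- a point worth noting, since the lemma assumes only $A_i \in \borel(\R^d)$, even though in the paper's applications the relevant sets $W_i$ and $B_i = W_i \setminus G_i$ are open, so the lighter route suffices there.
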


	\begin{proof} 
	Let $\tau_A$ be the first exit-time from the set $A_i$:
	\begin{align}
	\label{exit-time}
	\tau_{A_i}(t,x) \Let \inf\{s \ge 0 ~:~ \traj{X}{t,x}{\control{u}}{t+s} \notin A_i\}.
	\end{align}
	We know that $\tau_A$ is an $\filtration_t$-stopping time \cite[Thm.\ 1.6, Chapter 2]{ref:EthKur-86}. Let $\omega(\cdot) \mapsto \shift_s \big(\omega(\cdot)\big) \Let \omega(s+\cdot)$ be the time-shift operator. From the definition it follows that for all $i \ge 0$
		\begin{align*}
			\Theta^{A_{1:n}}_{i+1} = \Theta^{A_{1:n}}_i + \tau_{A_i} \circ \shift_{\Theta^{A_{1:n}}_i}.
		\end{align*}
	Now the assertion follows directly in light of the measurability of the mapping $\shift$ and right continuity of the filtration $\filtration_t$; see \cite[Prop.\ 1.4, Chapter 2]{ref:EthKur-86} for more details in this regard.
	\end{proof}
	
	Before proceeding with the proof of Theorem \ref{thm:SOC}, we start with a fact which is an immediate consequence of right continuity of the process $\traj{X}{t,x}{\control{u}}{\cdot}$: 
	\begin{Fact}
	\label{fact:W}
		Fix a control $\control{u} \in \controlmaps{U}_t$ and an initial condition $(t,x) \in \set{S}$. Let $(A_i)_{i=1}^n\subset\borel(\R^d)$ be a sequence of open sets. Then, for all $i \in \{1,\cdots, n\}$ 
		\begin{align*}
			\traj{X}{t,x}{\control{u}}{\Theta^{A_{1:n}}_i} &\notin A_i, \qquad \text{on} \quad \big \{\Theta^{A_{1:n}}_i < \infty \big \},
		\end{align*}
		where $\big( \Theta^{A_{1:n}}_i \big)_{i=1}^n$ are the sequential exit-times in the sense of Definition \ref{def:theta}.
	\end{Fact}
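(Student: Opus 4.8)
The plan is to exploit the right continuity of the sample paths together with Assumption \ref{a:set}.\ref{a:set:open}, which makes each $A_i$ open and hence its complement $A_i^c$ closed. Throughout I would work on the event $\big\{\Theta^{A_{1:n}}_i < \infty\big\}$, on which $\Theta^{A_{1:n}}_i$ is a finite nonnegative real number, so that the path value $\traj{X}{t,x}{\control{u}}{\Theta^{A_{1:n}}_i}$ is well defined.

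First I would unwind Definition \ref{def:theta}. Writing $S_i \Let \big\{ r \ge \Theta^{A_{1:n}}_{i-1} : \traj{X}{t,x}{\control{u}}{r} \notin A_i \big\}$, the definition reads $\Theta^{A_{1:n}}_i = \inf S_i$. By the defining property of the infimum, for every $k \in \N$ there exists $r_k \in S_i$ with $\Theta^{A_{1:n}}_i \le r_k < \Theta^{A_{1:n}}_i + 1/k$; consequently $r_k \ra \Theta^{A_{1:n}}_i$ with $r_k \ge \Theta^{A_{1:n}}_i$, while $\traj{X}{t,x}{\control{u}}{r_k} \in A_i^c$ for every $k$.

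Next I would pass to the limit. Since the sample paths are right continuous and $r_k \ra \Theta^{A_{1:n}}_i$ from the right, we have $\traj{X}{t,x}{\control{u}}{r_k} \ra \traj{X}{t,x}{\control{u}}{\Theta^{A_{1:n}}_i}$ as $k \ra \infty$. As $A_i^c$ is closed it contains the limit of any of its convergent sequences; since $\traj{X}{t,x}{\control{u}}{r_k} \in A_i^c$ for all $k$, this forces $\traj{X}{t,x}{\control{u}}{\Theta^{A_{1:n}}_i} \in A_i^c$, i.e.\ $\traj{X}{t,x}{\control{u}}{\Theta^{A_{1:n}}_i} \notin A_i$, which is the claim.

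The argument is short and has no genuine obstacle, but the one point deserving care is that the approximating times $r_k$ necessarily approach $\Theta^{A_{1:n}}_i$ \emph{from above} (being an infimum), so it is precisely the right continuity of the paths---and not merely the existence of left limits---that delivers the convergence $\traj{X}{t,x}{\control{u}}{r_k} \ra \traj{X}{t,x}{\control{u}}{\Theta^{A_{1:n}}_i}$. Correspondingly, openness of $A_i$ is essential, since it is what renders $A_i^c$ closed and stable under limits; for a non-open set the conclusion would generally fail, as the exit could occur through a boundary point that itself belongs to the set.
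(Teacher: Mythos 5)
Your proof is correct and is exactly the argument the paper has in mind: the paper states this Fact without proof, calling it ``an immediate consequence of right continuity of the process,'' and your write-up simply spells out that one-line justification (approximate the infimum from above, use right continuity of the paths, and conclude via closedness of $A_i^c$). No gaps; the point you flag about the approximating times converging from above, so that right continuity rather than left limits is what matters, is precisely the right observation.
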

	
\begin{proof}[Proof of Theorem \ref{thm:SOC}]
	We first show \eqref{prop V}. Observe that it suffices to prove	that
		\begin{align}
		\label{prop 1}
		\begin{array}{c}
			\Big\{\traj{X}{t,x}{\control{u}}{\cdot} \sat \big[ ( W_1 \Path G_1) \com \cdots \com (W_n \Path G_n) \big]_{\le T} \Big \} = \bigcap_{i = 1}^{n} \Big \{ \traj{X}{t,x}{\control{u}}{\eta_i} \in G_i \Big\}
		\end{array}
		\end{align}
	for all initial condition $(t,x)$ and control $\control{u}$, where the stopping time $\eta_i$ is as defined in \eqref{V}. Let $\omega$ belong to the left-hand side of \eqref{prop 1}. In view of the definition \eqref{path-event}, there exists a set of instants $(s_i)^{n}_{i=1} \subset [t,T]$ such that for all $i$, $\traj{X}{t,x}{\control{u}}{s_i}(\omega) \in G_i$ while $\traj{X}{t,x}{\control{u}}{r}(\omega) \in W_i \setminus G_i \teL B_i $ for all $r \in [s_{i-1}, s_i[$, where we set $s_0 = t$. It then follows by an induction argument that $\eta_i(\omega) = \Theta^{B_{1:n}}_i = s_i$, which immediately leads to $\traj{X}{t,x}{\control{u}}{\eta_i(\omega)}(\omega) \in G_i$ for all $i \le n$. This proves the relation $``\subset"$ between the left- and right-hand sides of \eqref{prop 1}. Now suppose that $\omega$ belongs to the right-hand side of \eqref{prop 1}. Then, we have $\traj{X}{t,x}{\control{u}}{\eta_i(\omega)}(\omega) \in G_i$ for all $i \le n$. In view of the definition of stopping times $\eta_i$ in \eqref{V}, it follows that $\traj{X}{t,x}{\control{u}}{r}(\omega) \in B_i \Let W_i \setminus G_i$ for all $r \in [\eta_{i-1}(\omega), \eta_i(\omega)[$. Introducing the time sequence $s_i \Let \eta_i(\omega)$ implies the relation $``\supset"$ between the left- and right-hand sides of \eqref{prop 1}. Together with preceding argument, this implies \eqref{prop 1}.

	To prove \eqref{prop Vtilde} we only need to show that
		\begin{align}
		\label{prop 2}
		\begin{array}{c}
			\Big \{ \traj{X}{t,x}{\control{u}}{\cdot} \sat (W_1 \Reach{T_1} G_1) \com \cdots \com (W_n\Reach{T_n} G_n)\Big\} 	= \bigcap_{i = 1}^{n} \Big \{ \traj{X}{t,x}{\control{u}}{\wt \eta_i} \in G_i \cap W_i \Big\}
		\end{array}
		\end{align}
	for all initial condition $(t,x)$ and controls $\control{u}$, where the stopping time $\wt \eta_i$ is introduced in \eqref{Vtilde}. To this end, let us fix $(t,x) \in \set{S}$ and $\control{u} \in \controlmaps{U}_t$, and assume that $\omega$ belongs to the left-hand side of \eqref{prop 2}. By definition \eqref{reach-event}, for all $i \le n$ we have $\traj{X}{t,x}{\control{u}}{T_i}(\omega) \in G_i$ and $\traj{X}{t,x}{\control{u}}{r}(\omega) \in W_i$ for all $r \in [T_{i-1}, T_{i}]$. By a straightforward induction, we see that $\wt \eta_i(\omega) = T_i$, and consequently $\traj{X}{t,x}{\control{u}}{\wt \eta_i(\omega)}(\omega) \in G_i \cap W_i$ for all $i \le n$. This establishes the relation $``\subset"$ between the left- and right-hand sides of \eqref{prop 2}. Now suppose $\omega$ belongs to the right-hand side of \eqref{prop 2}. Then, for all $i \le n$ we have $\traj{X}{t,x}{\control{u}}{\wt \eta_i(\omega)}(\omega) \in G_i \cap W_i$. By virtue of Fact \ref{fact:W} and an induction argument once again, it is guaranteed that $\wt \eta_i(\omega) = T_i$, and consequently it follows that $\traj{X}{t,x}{\control{u}}{T_i}(\omega) \in G_i$ and $\traj{X}{t,x}{\control{u}}{r}(\omega) \in W_i$ for all $r \in [T_{i-1}, T_{i}]$. This establishes the relation $``\supset"$ in \eqref{prop 2}, and the assertion follows.
	\end{proof}

	We now continue with the missing proof of Section \ref{sec:DPP}. Before proceeding with the proof of Theorem \ref{thm:DPP}, we need a preparatory lemma.
		
		\begin{Lem}
		\label{lem:lsc}
			Under Assumptions \ref{a:DPP}.\ref{a:process}\ref{a:process:exit-time} and \ref{a:DPP}.\ref{a:payoff}, the function $\set{S} \ni (t,x) \mapsto J_k(t,x;\control{u}) \in \R$ is lower semicontinuous for all $k \in \{1, \cdots, n\}$ and control $\control{u} \in \controlmaps{U}_0$.
		\end{Lem}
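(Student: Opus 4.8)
The plan is to verify lower semicontinuity directly from the definition. Fix $k \in \{1,\dots,n\}$, a policy $\control{u} \in \controlmaps{U}_0$, and an arbitrary point $(t_0,x_0) \in \set{S}$; it suffices to show that $\liminf_{(t,x) \ra (t_0,x_0)} J_k(t,x;\control{u}) \ge J_k(t_0,x_0;\control{u})$. To this end I would take an arbitrary sequence $(t_m,x_m) \ra (t_0,x_0)$ and abbreviate the random factors by $g_i^m \Let \ell_i\big(\traj{X}{t_m,x_m}{\control{u}}{\tau_i^k(t_m,x_m)}\big)$ and $g_i^0 \Let \ell_i\big(\traj{X}{t_0,x_0}{\control{u}}{\tau_i^k(t_0,x_0)}\big)$, so that $J_k(t_m,x_m;\control{u}) = \EE\big[\prod_{i=k}^n g_i^m\big]$.

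First I would invoke Assumption \ref{a:DPP}.\ref{a:process}\ref{a:process:exit-time}, which guarantees that for every $i$ the composed map $(t,x) \mapsto \traj{X}{t,x}{\control{u}}{\tau_i^k(t,x)}$ is $\PP$-a.s.\ continuous at $(t_0,x_0)$; hence $\traj{X}{t_m,x_m}{\control{u}}{\tau_i^k(t_m,x_m)} \ra \traj{X}{t_0,x_0}{\control{u}}{\tau_i^k(t_0,x_0)}$ $\PP$-a.s. Combining this a.s.\ convergence of the arguments with the lower semicontinuity of each $\ell_i$ (Assumption \ref{a:DPP}.\ref{a:payoff}) yields $\liminf_{m} g_i^m \ge g_i^0$ $\PP$-a.s., for every $i \in \{k,\dots,n\}$.

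The crux is to pass from these factorwise inequalities to one for the product, and then to the expectation. Since the payoff functions are nonnegative (indeed in \eqref{V's} they are indicators, so $0 \le \ell_i \le 1$) and bounded, the factors $g_i^m$ are nonnegative and uniformly bounded; a standard subsequence extraction then gives $\liminf_m \prod_{i=k}^n g_i^m \ge \prod_{i=k}^n \liminf_m g_i^m \ge \prod_{i=k}^n g_i^0$ $\PP$-a.s. Finally, since $\prod_{i=k}^n g_i^m$ is bounded and hence bounded below by an integrable constant, Fatou's lemma gives
\begin{equation*}
\liminf_m J_k(t_m,x_m;\control{u}) = \liminf_m \EE\Big[\prod_{i=k}^n g_i^m\Big] \ge \EE\Big[\liminf_m \prod_{i=k}^n g_i^m\Big] \ge \EE\Big[\prod_{i=k}^n g_i^0\Big] = J_k(t_0,x_0;\control{u}).
\end{equation*}
As $(t_m,x_m)$ was an arbitrary sequence converging to $(t_0,x_0)$, this establishes lower semicontinuity at $(t_0,x_0)$, and since $(t_0,x_0)$ was arbitrary the claim follows.

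I expect the main obstacle to be the product step: lower semicontinuity is in general not preserved under products, so the argument genuinely relies on the nonnegativity (and boundedness) of the payoffs to upgrade the factorwise $\liminf$ bounds into a bound on the product. This is exactly where the restriction to nonnegative payoffs---automatic for the indicator payoffs in \eqref{V's}---enters; for signed payoffs the statement would require additional hypotheses.
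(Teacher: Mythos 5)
Your proof is correct and is essentially identical to the paper's: both combine Fatou's lemma, the inequality $\liminf_m \prod_i g_i^m \ge \prod_i \liminf_m g_i^m$ for nonnegative factors, and the composition of the a.s.\ continuity in Assumption \ref{a:DPP}.\ref{a:process}\ref{a:process:exit-time} with the lower semicontinuity of the $\ell_i$, merely in a different order (the paper applies Fatou first, you apply it last). Your closing observation that the product step genuinely needs nonnegativity of the payoffs is a fair point that the paper's proof uses implicitly without comment.
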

	
	\begin{proof} 
		Fix \(k\in\{1,\ldots, n\}\). It is obvious that the function \(J_k\) is uniformly bounded since $\ell_k$ are. Therefore, 
		\begin{align}
			\liminf_{(s,y) \ra (t,x)}  & J_k\big(s,y;\control{u}\big) = \liminf_{(s,y) \ra (t,x)} \EE \Big[ \prod_{i = k}^n \ell_i \big( \traj{X}{s,y}{\control{u}}{\tau_i^k(s,y)} \big) \Big] \notag \\
			& \ge \EE \Big[  \liminf_{(s,y) \ra (t,x)}\prod_{i = k}^n \ell_i \big( \traj{X}{s,y}{\control{u}}{\tau_i^k(s,y)} \big) \Big] \label{fatou} \\
			& \ge \EE \Big[  \prod_{i = k}^n \liminf_{(s,y) \ra (t,x)}\ell_i \big( \traj{X}{s,y}{\control{u}}{\tau_i^k(s,y)} \big) \Big] \notag \\
			& \ge \EE \Big[  \prod_{i = k}^n \ell_i \big( \traj{X}{t,x}{\control{u}}{\tau_i^k(s,y)} \big) \Big] = J_k(t,x;\control{u}), \label{eq}
		\end{align}
		where the inequality in \eqref{fatou} follows from the Fatou's lemma, and \eqref{eq} is a direct consequence of Assumptions \ref{a:DPP}.\ref{a:process}\ref{a:process:exit-time} and \ref{a:DPP}.\ref{a:payoff}
	\end{proof}

	\begin{proof}[Proof of Theorem \ref{thm:DPP}]
		The proof extends the main result of our earlier work \cite[Thm.\ 4.7]{ref:MohChatLyg-15} on the so-called reach-avoid maneuver. Let $u \in \controlmaps{U}_t$, $\theta \Let \theta^{\control{u}} \in \setofst{t,T}$, and $\control{u}_\theta$ be the random control as introduced in Assumption \ref{a:DPP}.\ref{a:process}\ref{a:process:markov} Then we have
	\begin{subequations}
	\label{tower}
		\begin{align}
			\EE &\bigg[ \prod_{i = k}^n  \ell_i \big( \traj{X}{t,x}{\control{u}}{\tau_i^k} \big) ~\Big|~ \sigalg_\theta \bigg] \\
			& =  \sum_{j = k}^{n+1} \ind{\{\tau_{j-1}^k \le \theta < \tau_j^k\}} \EE \bigg[ \prod_{i = j}^n \ell_i \big( \traj{X}{t,x}{\control{u}}{\tau_i^k} \big) ~\Big|~ \sigalg_\theta \bigg] \prod_{i = k}^{j-1}\ell_i\big( \traj{X}{t,x}{\control{u}}{\tau_i^k} \big) \nonumber\\
			\label{tower_1} & = \sum_{j = k}^{n+1} \ind{\{\tau_{j-1}^k \le \theta < \tau_j^k\}} J_j\big(\theta, \traj{X}{t,x}{\control{u}}{\theta}; \control{u}_\theta \big) \prod_{i = k}^{j-1}\ell_i\big( \traj{X}{t,x}{\control{u}}{\tau_i^k} \big) \\
			\label{tower_2} & \le \sum_{j = k}^{n+1} \ind{\{\tau_{j-1}^k \le \theta < \tau_j^k\}} V_j\big(\theta, \traj{X}{t,x}{\control{u}}{\theta}\big) \prod_{i = k}^{j-1}\ell_i\big( \traj{X}{t,x}{\control{u}}{\tau_i^k} \big)
		\end{align}
	\end{subequations}
	where \eqref{tower_1} follows from Assumption \ref{a:DPP}.\ref{a:process}\ref{a:process:markov} and right continuity of the process, and \eqref{tower_2} is due to the fact that $\control{u}_\theta \in \controlmaps{U}_{\theta(\omega)}$ for each realization $\omega \in \Omega$. In light of the tower property of conditional expectation \cite[Thm.\ 5.1]{ref:Kallenberg-97}, arbitrariness of $\control{u} \in \controlmaps{U}_t$, and obvious inequality $V_j \le {V^*_j}$, we arrive at \eqref{DPP-sup}.

	To prove \eqref{DPP-sub}, consider uniformly bounded upper semicontinuous functions $(\phi_j)_{j=k}^n$ such that $\phi_j \le {V_j}_*$ on $\set{S}$. Mimicking the ideas in the proof of our earlier work \cite[Thm.\ 4.7]{ref:MohChatLyg-15} and due to Lemma \ref{lem:lsc}, one can construct an admissible control $\control{u}^\eps_j$ for any $\eps > 0$ and $j \in \{k,\cdots,n\}$ such that
	\begin{align}
	\label{u_eps}
		\phi_j(t,x) - 3\eps \le J_j(t,x;\control{u}_j^\eps) \qquad \forall (t,x) \in \set{S}.
	\end{align}
	Let us fix $\control{u} \in \controlmaps{U}_t$ and $\eps > 0$, and define
	\begin{align}
	\label{v_eps}
		\control{v}^\eps \Let \ind{[t,\theta]}\control{u} + \ind{]\theta, T]} \sum_{j = k}^{n}  \ind{\{\tau^k_{j-1} \le \theta < \tau^k_j \} } \control{u}_j^\eps,
	\end{align}
	where $\control{u}^\eps_j$ satisfies \eqref{u_eps}. Notice that Assumption \ref{a:DPP}.\ref{a:control}\ ensures $\control{v}^\eps \in \controlmaps{U}_t$. By virtue of the tower property, Assumptions \ref{a:DPP}.\ref{a:process}\ref{a:process:causal} and \ref{a:DPP}.\ref{a:process}\ref{a:process:markov}, and the assertions in \eqref{u_eps} and \eqref{v_eps}, it follows
	\begin{align*}
		V_k&(t,x)   \geq J_k(t,x;\control{v}^\epsilon)  = \EE\bigg[ \EE \Big[ \prod_{i = k}^n \ell_i \big( \traj{X}{t,x}{\control{v}^\eps}{\tau_i^k} \big) ~\Big|~ \sigalg_\theta \Big] \bigg] \\   
		& = \EE \bigg[ \sum_{j = k}^{n+1} \ind{\{\tau_{j-1}^k \le \theta < \tau_j^k\}} J_j\big(\theta, \traj{X}{t,x}{\control{u}}{\theta}; \control{u}^\eps_j\big) \prod_{i = k}^{j-1}\ell_i\big( \traj{X}{t,x}{\control{u}}{\tau_i^k} \big) \bigg] \\
		& = \EE \bigg[ \sum_{j = k}^{n+1} \ind{\{\tau_{j-1}^k \le \theta < \tau_j^k\}} \Big(\phi_j\big(\theta, \traj{X}{t,x}{\control{u}}{\theta}\big) -3\eps \Big) \prod_{i = k}^{j-1}\ell_i\big( \traj{X}{t,x}{\control{u}}{\tau_i^k} \big) \bigg].
	\end{align*}
	Now, consider a sequence of increasing continuous functions $(\phi_j^m)_{m\in \N}$ that converges point-wise to ${V_j}_*$. The existence of such sequence is ensured by Lemma \ref{lem:lsc}, see \cite[Lemma 3.5]{Reny_PointwiseConvergence}. By boundedness of $(\ell_j)_{i=1}^n$ and the dominated convergence Theorem, we get
	\begin{align*}
		V_k( & t,x)  \geq \\
		&\EE \bigg[ \sum_{j = k}^{n+1} \ind{\{\tau_{j-1}^k \le \theta < \tau_j^k\}} \Big({V_j}_*\big(\theta, \traj{X}{t,x}{\control{u}}{\theta}\big) -3\eps \Big) \prod_{i = k}^{j-1}\ell_i\big( \traj{X}{t,x}{\control{u}}{\tau_i^k} \big) \bigg]
	\end{align*}
	Since $\control{u} \in \controlmaps{U}_t$ and $\eps >0$ are arbitrary, this leads to \eqref{DPP-sub}.
	\end{proof}

\renewcommand{\thesection}{II}
 \section{Appendix: Technical Proofs of Section \ref{sec:application}} \label{app-B}
 \setcounter{equation}{0}
 \numberwithin{equation}{section}

	
\begin{proof}[Proof of Proposition \ref{prop:exit-time continuity}]
	The key step in the proof relies on the two Assumptions \ref{a:SDE}.\ref{a:SDE:nondegenerate} and \ref{a:SDE:set}. There is a classical result on non-degenerate diffusion processes indicating that if the process starts from the tip of a cone, then it enters the cone with probability one \cite[Corollary 3.2, p.\ 65]{ref:Bass-1998}. This hints at the possibility that the aforementioned Assumptions together with almost sure continuity of the strong solution of the SDE \eqref{SDE} result in the continuity of sequential exit-times $\Theta^{A_{1:n}}_i$ and consequently $\tau_i$. In the following we shall formally work around this idea.

	Let us assume that $t_m \le t$ for notational simplicity, but one can effectively follow similar arguments for $t_m > t$. By the definition of the SDE \eqref{SDE},
		\begin{align*}
			\traj{X}{t_m,x_m}{\control{u}}{r} = \traj{X}{t_m,x_m}{\control{u}}{t} &+ \int_{t}^r f\big( \traj{X}{t_m,x_m}{\control{u}}{s}, u_s \big) \diff s \\
			&+ \int_{t}^r \sigma \big(\traj{X}{t_m,x_m}{\control{u}}{s}, u_s\big) \diff W_s, \quad \PP \text{-a.s.}
		\end{align*}
	By virtue of \cite[Thm.\ 2.5.9, p.\ 83]{Krylov_ControlledDiffusionProcesses}, for all $q \ge 1$ we have 
		\begin{align*}
			\EE & \Big[\sup_{r \in [t,T]} \big \| \traj{X}{t,x}{\control{u}}{r}  - \traj{X}{t_m,x_m}{\control{u}}{r}   \big \|^{2q} \Big] \\
			& \le C_1(q,T,K) \EE \Big[ \big \| x - \traj{X}{t_m,x_m}{\control{u}}{t}   \big \|^{2q} \Big]\\
			& \le 2^{2q-1}C_1(q,T,K) \EE \Big[ \|x-x_m\|^{2q} + \big \| x_m - \traj{X}{t_m,x_m}{\control{u}}{t}   \big \|^{2q} \Big],
		\end{align*}
	whence, in light of \cite[Corollary 2.5.12, p.\ 86]{Krylov_ControlledDiffusionProcesses}, we get
		\begin{align}
		\label{ineq-init}
			\EE \Big[\sup_{r \in [t,T]}& \big \| \traj{X}{t,x}{\control{u}}{r}  - \traj{X}{t_n,x_n}{\control{u}}{r}   \big \|^{2q} \Big] \\ \notag
			& \le C_2(q,T,K,\|x\|)\big(\|x-x_n\|^{2q} + |t-t_n|^q\big).
		\end{align}
	In the above inequalities, $K$ is the Lipschitz constant of $f$ and $\sigma$ mentioned in Assumption \ref{a:SDE}.\ref{a:SDE:lip}; $C_1$ and $C_2$ are constant depending on the indicated parameters. Hence, in view of Kolmogorov's continuity criterion \cite[Corollary 1 Chap.\ IV, p.\ 220]{ref:Protter-2005}, one may consider a version of the stochastic process $\traj{X}{t,x}{\control{u}}{\cdot}$ which is continuous in $(t,x)$ in the topology of uniform convergence on compacts. This leads to the fact that $\PP$-a.s, for any $\eps > 0 $, for all sufficiently large $m$,
		\begin{equation}
		\label{tube 1}
			\traj{X}{t_m,x_m}{\control{u}}{r} \in \ball{B}_\eps\big(\traj{X}{t_0,x_0}{\control{u}}{r}\big), \qquad \forall r \in [t_m,T],
		\end{equation}
	where $\ball{B}_\eps(y)$ denotes the ball centered at $y$ and radius $\eps$. For simplicity, let us define the shorthand $\tau_i^m \Let \tau_i(t_m,x_m)$.\footnote{This notation is only employed in this proof.} By the definition of $\tau_i$ and Definition \ref{def:theta}, since the set $A_i$ is open, we conclude that 
		\begin{equation}
		\label{tube 2}
			\exists \eps>0, \quad \bigcup_{s \in [\tau_{i-1}^0, \tau_i^0[} \ball{B}_{\eps}(\traj{X}{t_0,x_0}{\control{u}}{s}) \cap A_i^c = \emptyset \qquad \PP \text{-a.s.}
		\end{equation}	
	By definition $\tau_0^0 \Let \tau_0(t_0,x_0) = t_0 $. As an induction hypothesis, let us assume $\tau_{i-1}^0$ is $\PP$-a.s.\ continuous, and we proceed with the induction step. One can deduce that \eqref{tube 2} together with \eqref{tube 1} implies that $\PP$-a.s.\ for all sufficiently large $m$,
		\begin{equation*}
			\traj{X}{t_m,x_m}{\control{u}}{r} \in A_i, \qquad \forall r \in [t_m, \tau_i^0[.
		\end{equation*}
	In conjunction with $\PP$-a.s.\ continuity of sample paths, this immediately leads to
		\begin{align}
		\label{liminf tau_n}
			\liminf_{m \ra \infty} \tau_i^m & \Let \liminf_{m \ra \infty} \tau_i(t_m,x_m) \\
			& \ge \tau_i(t_0,x_0) \qquad \PP \text{-a.s.} \notag
		\end{align}
	On the other hand, as mentioned earlier, the Assumptions \ref{a:SDE}.\ref{a:SDE:nondegenerate} and \ref{a:SDE:set} imply that the set of sample paths that hit the boundary of $A_i$ and do not enter the set is negligible \cite[Corollary 3.2, p.\ 65]{ref:Bass-1998}. Thus, with probability one we have
		\begin{align*}
			\forall \delta > 0, \quad &\exists s \in \big[ \Theta^{A_{1:n}}_{i}(t_0,x_0), ~ \Theta^{A_{1:n}}_{i}(t_0,x_0) + \delta \big[ ~:~ \\
			& \traj{X}{t_0,x_0}{\control{u}}{s} \in A_i 
		\end{align*}
	Hence, in light of \eqref{tube 1}, $\PP$-a.s.\ there exists $\eps >0 $, possibly depending on $\delta$, such that for all sufficiently large $m$ we have 
		\begin{align*}
			\traj{X}{t_m,x_m}{\control{u}}{s} \in \ball{B}_{\eps}(\traj{X}{t_0,x_0}{\control{u}}{s}) 	\subset A_i^c
		\end{align*}
	Recalling the induction hypothesis, we note that in accordance with the definition of sequential stopping times $\Theta^{A_{1:n}}_{i}$, one can infer that $\Theta^{A_{1:n}}_{i}(t_m,x_m) \le s < \Theta^{A_{1:n}}_{i}(t_0,x_0) + \delta$. From arbitrariness of $\delta$ and the definition of $\tau_i$, this leads to
		\begin{align*} 
			\limsup_{m \ra \infty} \tau_i(t_m,x_m) & \Let \limsup_{m \ra \infty} \big(\Theta^{A_{1:n}}_{i}(t_m,x_m) \mn T_i \big)\\
			& \le \tau_i(t_0,x_0) \qquad \PP \text{-a.s.},
		\end{align*}
	where in conjunction with \eqref{liminf tau_n}, $\PP$-a.s.\ continuity of the map $(t,x) \mapsto \tau_i(t,x)$ at $(t_0,x_0)$ for any $i \in \{1,\cdots,n\}$ follows. The assertion follows by induction.
	
	The continuity of the mapping $(t,x) \mapsto \traj{X}{t,x}{\control{u}}{\tau_i(t,x)}$ follows immediately from the almost sure continuity of the stopping time $\tau_i(t,x)$ in conjunction with the almost sure continuity of the version of the stochastic process $\traj{X}{t,x}{\control{u}}{\cdot}$ in $(t,x)$; for the latter let us note again that Kolmogorov's continuity criterion guarantees the existence of such a version in light of \eqref{ineq-init}.	
\end{proof}
	
\begin{proof}[Proof of Theorem \ref{thm:DPE}]
	Here we briefly sketch the proof of the first assertion of the theorem (supersolution property), and refer the reader to \cite[Thm.\ 4.10]{ref:MohChatLyg-15} for details concerning the same technology to prove the theorem. 
	
	Note that any $\filtration_t$-progressively measurable $\control{u} \in \controlmaps{U}_t$ satisfies Assumptions~\ref{a:DPP}.\ref{a:control}. It is a classical result \cite[Chap.\ 7]{ref:Oksendal} that the strong solution $\traj{X}{t,x}{\control{u}}{\cdot}$ satisfies Assumptions \ref{a:DPP}.\ref{a:process}\ref{a:process:causal} and \ref{a:DPP}.\ref{a:process}\ref{a:process:markov} Furthermore, Proposition \ref{prop:exit-time continuity} together with almost sure path-continuity of the strong solution guarantees Assumption~\ref{a:DPP}.\ref{a:process}\ref{a:process:exit-time} Hence, having fulfilled all the required assumptions of Theorem \ref{thm:DPP}, we can employ the DPP \eqref{DPP}. For the sake of contraction to the first assertion, suppose there exists $(t_0,x_0) \in [0,T_k[ \times A_k$ such that $-\sup_{u\in\set U}\mathcal{L}^u {V_k}_*(t_0,x_0) < 0$ in the viscosity sense. That is, there exist a smooth function $\phi$ and $\delta > 0$ such that 
	\begin{align*}
	\begin{cases}
		\min\limits_{(t,x) \in \set S} \big({V_k}_* - \phi\big)(t,x) = \big({V_k}_*-\phi\big)(t_0,x_0) = 0,\\
		-\sup\limits_{u \in \set{U}} \mathcal{L}^u \phi(t_0,x_0) < -2\delta.
	\end{cases}
	\end{align*}
	Since $\phi$ is smooth, the map $(t,x) \mapsto \mathcal{L}^u \phi (t,x)$ is continuous for each $u \in \set U$. Therefore, there exist $u_0 \in \set{U}$ and $r>0$ such that 
	\begin{align*}
		-\mathcal{L}^{u_0} \phi(t,x) < -\delta, \quad \forall (t,x) \in \ball{B}_r(t_0,x_0).
	\end{align*}
	Let us define the stopping time $\theta(t,x)$ as the first exit time of trajectory $\traj{X}{t,x}{u_0}{\cdot}$ from the ball $\ball{B}_r(t_0,x_0)$. Note that by continuity of the solution process of the SDE \eqref{SDE}, it holds that $t < \theta(t,x)$ with probability 1 for all $(t,x) \in \ball{B}_r(t_0,x_0)$. Therefore, selecting $r>0$ sufficiently small so that $\ball{B}_r(t_0,x_0) \subset [0,T_k[ \times A_k$ and applying It\^{o}'s formula, we see that for all $(t,x) \in \ball{B}_r(t_0,x_0)$, we have 
	\begin{align}
	\label{phi-theta}
		\phi(t,x) < \EE\big[\phi\big(\theta(t,x),\traj{X}{t,x}{u}{\theta(t,x)}\big)\big].
	\end{align}
	Let us take a sequence $(t_m,x_m,V_k(t_m,x_m))_{m \in \set{N}}$ converging to $(t_0,x_0,{V_k}_*(t_0,x_0))$, i.e., $\phi(t_m,x_m) \ra \phi(t_0,x_0) = {V_k}_*(t_0,x_0).$ Due to the inequality \eqref{phi-theta}, for sufficiently large $m$ we have $$V(t_m,x_m) < \EE\big[{V_k}_*\big(\theta(t_m,x_m),\traj{X}{t_m,x_m}{u}{\theta(t_m,x_m)}\big)\big],$$ which, in view of the fact that $\theta(t_m,x_m)<\tau_k\mn T_k$, contradicts the DPP in \eqref{DPP-sup}. The \emph{subsolution} property is proved effectively in a similar fashion.
\end{proof}

	To provide boundary conditions, in particular in the viscosity sense \eqref{boundary visc}, we need some preliminaries as follows: 

	\begin{Fact}
	\label{fact:theta}
		Consider a control $\control{u} \in \controlmaps{U}_t$ and initial condition $(t,x) \in \set{S}$. Given a sequence of $(A_i)_{i=k}^n \subset \borel(\R^d)$ and stopping time $\theta \in \setofst{t,T}$, for all $k \in \{1,\cdots,n\}$ and $j \ge i \ge k$ we have
		\begin{align*}
			\Theta^{A_{k:n}}_{j}(t,x) = \Theta^{A_{i:n}}_{j}\big(\theta,\traj{X}{t,x}{\control{u}}{\theta}\big),  
		\end{align*}
		pointwise on the set $\big\{ \Theta^{A_{k:n}}_{i-1}(t,x) \le \theta < \Theta^{A_{k:n}}_{i}(t,x) \big\}$.
	\end{Fact}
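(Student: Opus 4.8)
The plan is to establish the identity by a finite induction on $j$ running from $i$ up to $n$, using only the recursive definition of the sequential exit-times in Definition \ref{def:theta} together with the pathwise flow property of the strong solution of \eqref{SDE}. Throughout I restrict attention to the event $E \Let \big\{\Theta^{A_{k:n}}_{i-1}(t,x) \le \theta < \Theta^{A_{k:n}}_i(t,x)\big\}$ and abbreviate $\sigma_\ell \Let \Theta^{A_{k:n}}_\ell(t,x)$ and $\rho_\ell \Let \Theta^{A_{i:n}}_\ell\big(\theta,\traj{X}{t,x}{\control{u}}{\theta}\big)$. The first step is to record that, for $r \ge \theta$, the restarted trajectory coincides with the original one, i.e.\ $\traj{X}{t,x}{\control{u}}{r} = \traj{X}{\theta,\bar x}{\control{u}_\theta}{r}$ with $\bar x = \traj{X}{t,x}{\control{u}}{\theta}$ and $\control{u}_\theta$ the shifted policy of Assumption \ref{a:DPP}.\ref{a:process}\ref{a:process:markov}; this is the classical flow property of SDE solutions \cite[Chap.\ 7]{ref:Oksendal}, and it reduces the claim to a statement about the single sample path $\traj{X}{t,x}{\control{u}}{\cdot}$ on $[\theta,\infty)$. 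In particular $\rho_{i-1} = \theta$ by the convention $\Theta^{A_{i:n}}_{i-1} \Let \theta$ of Definition \ref{def:theta}.

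For the base case $j = i$ I would argue as follows. On $E$ the strict inequality $\theta < \sigma_i = \inf\{r \ge \sigma_{i-1} : \traj{X}{t,x}{\control{u}}{r} \notin A_i\}$ rules out any exit of $A_i$ in the interval $[\sigma_{i-1},\theta]$, so that $\traj{X}{t,x}{\control{u}}{r} \in A_i$ for every $r \in [\sigma_{i-1},\theta]$. Consequently $\{r \ge \sigma_{i-1} : \traj{X}{t,x}{\control{u}}{r} \notin A_i\} = \{r \ge \theta : \traj{X}{t,x}{\control{u}}{r} \notin A_i\}$, and taking infima gives $\sigma_i = \inf\{r \ge \theta : \traj{X}{t,x}{\control{u}}{r} \notin A_i\} = \rho_i$, the last equality being the definition of $\rho_i$ together with $\rho_{i-1} = \theta$. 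For the inductive step, suppose $\sigma_{j-1} = \rho_{j-1}$ for some $j$ with $i < j \le n$; then both $\sigma_j$ and $\rho_j$ are, by Definition \ref{def:theta}, the first exit-time of $A_j$ after the common instant $\sigma_{j-1} = \rho_{j-1}$, computed from the same trajectory on $[\theta,\infty)$, whence $\sigma_j = \rho_j$. This closes the induction and yields $\Theta^{A_{k:n}}_j(t,x) = \Theta^{A_{i:n}}_j\big(\theta,\traj{X}{t,x}{\control{u}}{\theta}\big)$ on $E$ for all $j \ge i$.

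The only genuinely delicate point is the base case: one must turn the strict inequality $\theta < \sigma_i$ into the pathwise assertion that no exit of $A_i$ occurs on $[\sigma_{i-1},\theta]$, and thereby realign the left endpoint of the infimum from $\sigma_{i-1}$ to $\theta$. Once this realignment and the flow-property identification of the two trajectories are in place, every subsequent index $j > i$ follows by a one-line unwinding of the recursion, so no further estimates or measurability considerations are needed.
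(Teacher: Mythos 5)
Your proof is correct: the flow-property identification of the restarted trajectory with the original one on $[\theta,\infty)$, the base-case realignment of the infimum's left endpoint (using $\theta < \Theta^{A_{k:n}}_{i}(t,x)$ to rule out any exit from $A_i$ on $[\Theta^{A_{k:n}}_{i-1}(t,x),\theta]$), and the induction over $j$ are exactly the reasoning needed. The paper states this as a Fact with no proof at all---treating it as immediate from Definition \ref{def:theta} together with causality/uniqueness of the solution process---so your argument is a faithful formalization of precisely the argument the paper leaves implicit, not a different route.
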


	\begin{Lem}
	\label{lem:exit-time unif cont}
	Suppose that the conditions of Proposition \ref{prop:exit-time continuity} hold. Given a sequence of controls $(\control{u_m})_{m\in \N} \subset \controlmaps{U}$ and initial conditions $(t_m,x_m) \ra (t,x)$, we have
		\begin{align*}
			\lim_{m \ra \infty} \Big \| \traj{X}{t,x}{\control{u}_m}{\tau_i(t,x)} - \traj{X}{t_m,x_m}{\control{u}_m}{\tau_i(t_m,x_m)} \Big \| = 0 \quad \PP \text{-a.s.}, 
		\end{align*}
	where $\tau_i(t,x) \Let \Theta^{A_{1:n}}_i(t,x) \mn T_i$. 
	\end{Lem}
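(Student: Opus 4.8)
The plan is to read this as the control-dependent strengthening of Proposition~\ref{prop:exit-time continuity}: there the policy was frozen and only the initial datum moved, whereas here the policy $\control{u}_m$ varies with $m$ as well. The whole strategy rests on one observation, namely that \emph{every} ingredient in the proof of Proposition~\ref{prop:exit-time continuity} is uniform in the policy. The moment estimate \eqref{ineq-init} is obtained from \cite{Krylov_ControlledDiffusionProcesses} using only the Lipschitz constant $K$ of $f,\sigma$ (Assumption~\ref{a:SDE}.\ref{a:SDE:lip}), which is uniform over $u\in\set{U}$; and the ``entering-the-cone with probability one'' property of \cite{ref:Bass-1998} depends only on the non-degeneracy constant $\delta$ (Assumption~\ref{a:SDE}.\ref{a:SDE:nondegenerate}) and on the cone parameters $h,r$ (Assumption~\ref{a:SDE:set}). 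None of these sees the control. Writing $X^{(1)}_m(r)\Let\traj{X}{t,x}{\control{u}_m}{r}$ and $X^{(2)}_m(r)\Let\traj{X}{t_m,x_m}{\control{u}_m}{r}$, I would first apply \eqref{ineq-init} with the \emph{same} policy $\control{u}_m$ to the two data $(t,x)$ and $(t_m,x_m)$, obtaining $\EE\big[\sup_{r\in[t\mn t_m,T]}\|X^{(1)}_m(r)-X^{(2)}_m(r)\|^{2q}\big]\le C_2\big(\|x-x_m\|^{2q}+|t-t_m|^q\big)$ with $C_2$ independent of $m$. Hence $\sup_r\|X^{(1)}_m(r)-X^{(2)}_m(r)\|\to 0$ in $L^{2q}$, and after extracting a subsequence (enough for the downstream use in the continuity of $V^\eps$, which only invokes bounded convergence) $\PP$-almost surely.

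I would then split the target quantity by the triangle inequality,
\[
\big\|X^{(1)}_m(\tau_i(t,x))-X^{(2)}_m(\tau_i(t_m,x_m))\big\|\le \big\|X^{(1)}_m(\tau_i(t,x))-X^{(2)}_m(\tau_i(t,x))\big\|+\big\|X^{(2)}_m(\tau_i(t,x))-X^{(2)}_m(\tau_i(t_m,x_m))\big\|.
\]
The first summand is bounded by $\sup_r\|X^{(1)}_m(r)-X^{(2)}_m(r)\|$ and vanishes by the previous step. For the second it is enough to establish (a) $|\tau_i(t,x)-\tau_i(t_m,x_m)|\to 0$ $\PP$-a.s., and (b) that the paths $X^{(2)}_m$ share a modulus of continuity that is uniform in $m$, so that a single path evaluated at two asymptotically equal times produces asymptotically equal values. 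Part (b) is routine: the same Krylov bounds give $\EE\big[\|X^{(2)}_m(s)-X^{(2)}_m(r)\|^{2q}\big]\le C|s-r|^q$ with $C$ independent of $m$ (since $x_m\to x$ stays bounded), so Kolmogorov's criterion, as already used for \eqref{ineq-init}, furnishes a common H\"older modulus along the subsequence, whence $\|X^{(2)}_m(\tau_i(t,x))-X^{(2)}_m(\tau_i(t_m,x_m))\|\le C_\omega|\tau_i(t,x)-\tau_i(t_m,x_m)|^{\gamma}\to 0$.

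The heart of the matter, and the step I expect to be the main obstacle, is part (a). I would run the same two-sided induction on $i$ as in Proposition~\ref{prop:exit-time continuity}, now reading $X^{(1)}_m$ as the ``reference'' and $X^{(2)}_m$ as its perturbation. For the lower bound $\liminf_m\tau_i(t_m,x_m)\ge\tau_i(t,x)$ one exploits that $A_i$ is open: on each compact subinterval $[\Theta^{A_{1:n}}_{i-1},\tau_i(t,x)-\eps]$ the reference $X^{(1)}_m$ keeps a positive distance from $A_i^c$, so by the uniform closeness just proved (the analogue of the tube \eqref{tube 1}) the nearby path $X^{(2)}_m$ is still inside $A_i$, mirroring \eqref{tube 2}. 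For the upper bound $\limsup_m\tau_i(t_m,x_m)\le\tau_i(t,x)$ one uses the exterior-cone/non-degeneracy fact that $X^{(1)}_m$ does not linger on $\partial A_i$ but enters $(A_i^c)^\circ$ immediately after $\tau_i(t,x)$, so $X^{(2)}_m$ must do so as well. The genuine difficulty, compared with Proposition~\ref{prop:exit-time continuity}, is that the reference path $X^{(1)}_m$ itself changes with $m$, so these two assertions cannot be read off from a single fixed trajectory and must instead be quantified uniformly in $m$; this is exactly why the policy-independence of $C_2$, $\delta$, $h$ and $r$ noted at the outset is indispensable, and why I would recast \cite[Corollary~3.2]{ref:Bass-1998} as a quantitative clean-exit estimate whose constants scale only through $\delta$ and the cone. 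Granting this uniform version, the induction closes and part (a) --- hence the lemma --- follows.
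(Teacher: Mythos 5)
The paper gives no in-text proof of this lemma: it defers to the ``identical technique'' of \cite[Lemma 4.11]{ref:MohajerinChaterjeeLygeros-2012}, i.e.\ the observation that every constant in the proof of Proposition \ref{prop:exit-time continuity} --- the Krylov moment bound \eqref{ineq-init}, the Kolmogorov-criterion modulus, and the non-degeneracy/cone parameters feeding into \cite[Corollary 3.2, p.\ 65]{ref:Bass-1998} --- depends only on $K$, $\delta$, the cone data $h,r$ and $T$, never on the policy. Your proposal is built on exactly this observation, so in spirit you follow the intended route, and you correctly locate the one place where the rerun is \emph{not} literally identical: in Proposition \ref{prop:exit-time continuity} the tube argument \eqref{tube 1}--\eqref{tube 2} and the clean-exit property are applied to a single \emph{fixed} reference trajectory, whereas here the reference $\traj{X}{t,x}{\control{u}_m}{\cdot}$ changes with $m$, so the random penetration depth into $A_i^c$ after the exit time need not stay bounded away from zero along the sequence.

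Having located this crux, however, you do not resolve it: part (a) ends with ``Granting this uniform version, the induction closes.'' The policy-uniform quantitative clean-exit estimate --- say, a bound $q(\rho,\delta)$, independent of $m$, on the probability that $\traj{X}{t,x}{\control{u}_m}{\cdot}$ fails to reach distance $\rho$ inside $A_i^c$ within time $\delta$ after its exit, with $q(\rho,\delta)\to 0$ as $\rho \da 0$ --- is precisely what separates this lemma from Proposition \ref{prop:exit-time continuity}, and it is postulated rather than proved; as written, your argument proves the lemma only modulo this estimate. (Such an estimate is plausible, e.g.\ by making Bass's cone-hitting argument quantitative via the strong Markov property at the exit time, but supplying it \emph{is} the proof, not a remark.) A second, smaller shortfall: from the $L^{2q}$ bound you obtain only convergence in probability of $\sup_r \| \traj{X}{t,x}{\control{u}_m}{r} - \traj{X}{t_m,x_m}{\control{u}_m}{r}\|$, hence almost sure convergence only along a subsequence, and likewise your common H\"older modulus in (b) needs a further extraction; the lemma asserts $\PP$-a.s.\ convergence of the \emph{full} sequence. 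Your defense --- that Proposition \ref{prop:boundary} only needs sub-subsequence arguments inside expectations --- is accurate (the paper itself passes to a diagonal subsequence there), but it means you have proved a weaker statement than the one claimed; to be complete you should either restate the lemma as convergence in probability, or explain how to upgrade to full-sequence almost sure convergence, which, absent any rate on $(t_m,x_m)\to(t,x)$, is itself a delicate point.
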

	Note that Lemma \ref{lem:exit-time unif cont} is indeed a stronger statement than Proposition \ref{prop:exit-time continuity} as the desired continuity is required uniformly with respective to the control. Let us highlight that the stopping times $\tau_i(t,x)$ and $\tau_i(t_m,x_m)$ are both effected by the control $\control{u}_m$. But nonetheless, the mapping $(t,x) \mapsto \traj{X}{t,x}{\control{u_m}}{\tau_i}$ is almost surely continuous irrespective of the controls $(\control{u}_m)_{m\in \N}$. For the proof we refer to an identical technique used in \cite[Lemma 4.11]{ref:MohChatLyg-15}

\begin{proof}[Proof of Proposition \ref{prop:boundary}]
	The boundary condition in \eqref{boundary pointwise} is an immediate consequence of the definition of the sequential exit-times introduced in Definition \ref{def:theta}. Namely, for any initial state $x \in A_k^c$ we have $\Theta^{A_{k:n}}_k(t,x) = t$, and in light of Fact \ref{fact:theta} for all $i \in \{k, \cdots, n\}$ 
		\begin{align*}
			\Theta^{A_{k:n}}_i(t,x) &= \Theta^{A_{k+1:n}}_i(t,x), \\ 
			\forall (t,x) ~ &\in [0,T_k] \times  A_k^c \bigcup \{T_k\} \times \R^d.
		\end{align*}
	Since $\tau_k^k = t$ for the above initial conditions, then $\traj{X}{t,x}{\control{u}}{\tau_k^k} = x$ which yields to \eqref{boundary pointwise}. 

	For the boundary conditions \eqref{boundary visc}, we show the first assertion; the second follows similarly. Let $\big(t_m,x_m\big) \ra \big(t,x\big)$ where $t_m < T_k$ and $x_m \in A_k$. Invoking the DPP in Theorem \ref{thm:DPP} and introducing $\theta \Let \tau_{k+1}^k$ in \eqref{DPP-sup}, we reach
		\begin{align*}
			V_k(t_m,x_m) & \le \sup_{\control{u} \in \controlmaps{U}_t} \EE \Big[ {V}^*_{k+1} \big(\tau_k^k, \traj{X}{t_m,x_m}{\control{u}}{\tau_k^k}\big) \ell_k\big( \traj{X}{t_m,x_m}{\control{u}}{\tau_k^k}\big) \Big].
		\end{align*}
	Note that one can replace a sequence of controls in the above inequalities to attain the supremum. This sequence, of course, depends on the initial condition $(t_m,x_m)$. Hence, let us denote it via two indices $(\control{u}_{m,j})_{j \in \N}$. One can deduce that there exists a subsequence of $(\control{u}_{m_j})_{j \in \N}$ such that 
		\begin{align}
			\lim_{m \ua \infty}& V_k(t_m,x_m)  \notag  \\
			&\le \lim_{m \ua \infty}\lim_{j \ua \infty} \EE \Big[ {V}^*_{k+1} \big(\tau_k^k, \traj{X}{t_m,x_m}{\control{u}_{m,j}}{\tau_k^k}\big) \ell_k\big( \traj{X}{t_m,x_m}{\control{u}_{m,j}}{\tau_k^k}\big) \Big] \nonumber\\
			& \le \lim_{j \ua \infty} \EE \Big[ {V}^*_{k+1} \big(\tau_k^k, \traj{X}{t_j,x_j}{\control{u}_{m_j}}{\tau_k^k}\big) \ell_k\big( \traj{X}{t_j,x_j}{\control{u}_{m_j}}{\tau_k^k}\big) \Big] \nonumber\\
			& \label{lem 1} \le  \EE \Big[\lim_{j \ua \infty} {V}^*_{k+1} \big(\tau_k^k, \traj{X}{t_j,x_j}{\control{u}_{m_j}}{\tau_k^k}\big) \ell^*_k\big( \traj{X}{t_j,x_j}{\control{u}_{m_j}}{\tau_k^k}\big) \Big]\\
			& \label{lem 2} = V^*_{k+1}(t,x) \ell^*_k(x)
		\end{align}
	where \eqref{lem 1} and \eqref{lem 2} follow, respectively, from Fatou's lemma and the uniform continuity assertion in Lemma \ref{lem:exit-time unif cont}. Let us recall that by Lemma \ref{lem:exit-time unif cont} we know $\tau_k^k(t_j,x_j)\ra \tau_k^k(t,x) = t$ as $j \ra \infty$ uniformly with respect to the controls $(\control{u}_{m_j})_{j \in \N}$. Similar analysis would follow for the second part of \eqref{boundary visc} by using the other side of DPP in \eqref{DPP-sub}.
\end{proof}
	
\bibliographystyle{amsalpha} 
\bibliography{ref,ref_MohajerinEsfahani}

 \end{document}